\documentclass[11pt]{amsart}
\usepackage[margin=1in]{geometry}
\usepackage{amsmath,amssymb,amsthm}
 
\usepackage{tikz}
\usepackage{ytableau}
\usepackage{mfabacus}
\usepackage{genyoungtabtikz}
\usetikzlibrary{decorations.pathreplacing,arrows.meta}
\usepackage{array,booktabs}
\usepackage[hidelinks]{hyperref}
\usepackage{wasysym}

\makeatletter
\@namedef{subjclassname@2020}{%
	\textup{2020} Mathematics Subject Classification}
\makeatother

\newtheorem{theorem}{Theorem}[section]
\newtheorem{proposition}[theorem]{Proposition}
\newtheorem{lemma}[theorem]{Lemma}
\newtheorem{corollary}[theorem]{Corollary}
 
\theoremstyle{definition}
\newtheorem{definition}[theorem]{Definition}
\newtheorem{example}[theorem]{Example}
 
\theoremstyle{remark}
\newtheorem{remark}[theorem]{Remark}

\DeclareMathOperator{\emp}{emp}
\DeclareMathOperator{\Emp}{Emp}

\newcommand{\vac}[3]{\Emp_{#3}(#1,#2)}

\numberwithin{equation}{section}

\title[Arm lengths and crystal reflection]{Divisible Arm Lengths, Crystal Reflections, and Enumeration of Newly Found Decomposition Columns}
 
\author{David J. Hemmer}
\address{Department of Mathematical Sciences, Michigan Technological University, Houghton, MI 49931, USA}
\email{djhemmer@mtu.edu}
 
\author{Pavel Turek}
\address{Representation Theory and Algebraic Combinatorics Unit, Okinawa Institute of Science and Technology, Okinawa, Japan}
\email{pavel.turek@oist.jp}
 
\subjclass[2020]{Primary: 05A17, Secondary: 05A15, 05E10, 20C30, 20C08}
\keywords{Crystal reflections, Partitions, Blocks, Symmetric groups, Hecke algebras}
 
\begin{document}

\begin{abstract}
In recent work the authors determine complete columns of symmetric-group decomposition matrices in odd prime characteristic $p$ labeled by $p$-regular partitions for which every hook of length divisible by $p$ has even arm length. In the present paper we enumerate these partitions and prove that each block of $p$-weight $w$ contains precisely
\[
\binom{w+\frac{p-3}{2}}{w}
\]
such partitions. 

More generally, for any integers $d,e>1$, we study and enumerate $d$-balanced $e$-regular partitions -- partitions for which every hook of length divisible by $e$ has arm length divisible by $d$. Our first main result is that the crystal (affine) reflections preserve the $d$-balanced property for all $d,e > 1$. It follows that, for fixed $d$, $e$, and $w$, the number of $d$-balanced $e$-regular partitions in a block of $e$-weight $w$ is independent of the $e$-core. We then compute this number by working in RoCK blocks, obtaining an explicit binomial formula valid for every block.

We also investigate closely related odd sequences of partitions. Among others, we find the generating function of the number of odd sequences occurring in a block. Alongside their representation-theoretic relevance, we expect these results to be of independent combinatorial interest.
\end{abstract}
 
\maketitle
\section{Introduction}
 
Hooks and hook lengths are among the most studied objects in the combinatorics of partitions. The celebrated hook length formula of Frame, Robinson, and Thrall \cite{FRT} counts standard Young tableaux of a given shape, and its generalizations permeate algebraic combinatorics. The study of partitions constrained by their hook lengths has a long and rich history: $t$-core partitions, defined by the absence of hooks of length~$t$, connect partition theory to modular representation theory, the theory of cranks, and Ramanujan-type congruences \cite{GKS}. Simultaneous core partitions, self-conjugate cores, and related objects continue to generate a substantial body of work.
 
While hook \emph{lengths} have been studied extensively, the lengths of two constituents of a hook, the \emph{arm length} and the \emph{leg length}, have received comparatively little individual attention. Of course, the arm and leg lengths together determine the hook length, and the parity of the leg length appears in places such as the Murnaghan--Nakayama rule and the Jantzen--Schaper formula. By contrast, conditions imposed purely on arm lengths, such as divisibility or parity constraints on the arms of hooks of certain lengths, do not seem to have been investigated from a combinatorial perspective. Perhaps the only slight exception is the paper by Fayers \cite{FayersPartitionModelsCrystal}, in which crystal models are labelled by so-called \emph{arm sequences} $(A_1,A_2,\dots)$, and consist of partitions with no $ie$-hook of arm length $A_i$ (for any $i\geq 1$).
 
In this paper we begin a systematic study of such conditions. Let $d, e>1$ be integers. We consider a class of partitions, recently introduced by the second author \cite{turek2025mullineuxmapdbalancedpartitions}, in which every hook of length divisible by $e$ has arm length divisible by $d$; these are the \emph{$d$-balanced} partitions (with respect to~$e$).

Our interest in this class is partly representation-theoretic. When $e$ is a prime, $2$-balanced $e$-regular partitions label the decomposition-matrix columns of symmetric groups that are determined completely in \cite[Theorem~1.1]{hemmerturek2026}, and the question of how many such columns occur in a given block is what led to the present work. We fully answer this question here in a way that gives an algorithm to quickly identify these columns in any given block.  

The investigation of columns labeled by $d$-balanced $e$-regular partitions for $d>2$ is a subject of study in an ongoing work of Gustavsson, Law, Putignano, Speyer and the second author. It has already been shown in \cite[Theorem~1.3]{turek2025mullineuxmapdbalancedpartitions} that $d$-balanced $e$-regular partitions have representation-theoretic significance for any $d>1$: the Mullineux map (which for symmetric groups describes tensor products with sign) has a particularly simple description when applied to $d$-balanced $e$-regular partitions. The connections between $d$-balanced partitions and decomposition numbers -- computing which is a central open problem of representation theory of symmetric groups -- mark $d$-balanced partitions as objects of algebraic and combinatorial significance.

The crucial observation of this paper is that the $d$-balanced condition interacts naturally with the crystal graph on $e$-regular partitions and the abacus combinatorics of James. This interaction allows us to show that the number of $d$-balanced $e$-regular partitions in a block depends only on the $e$-weight and not on the $e$-core. We then compute these common values explicitly by passing to RoCK blocks, where the abacus structure is especially transparent. This strategy was used by Chuang, Miyachi and Tan in \cite{ChuangMiyachiTanTilings17} to count their $m$-increasing partitions in a given block. Comparing the forms of $d$-balanced $e$-regular and $0$-increasing partitions in RoCK blocks, one can use the fact that both families of partitions are preserved by crystal reflections to obtain that, surprisingly, all $d$-balanced $e$-regular partitions are $0$-increasing. As the combinatorial definition of crystal reflections can be extended to all partitions, this is true even if we omit $e$-regularity. This extended definition can be used to easily count all $d$-balanced partitions in a given block; however, we take a slightly different road to obtain this number, which allows us to explore the structure of $d$-balanced $e$-singular partitions.

There is also a second, more combinatorial, theme in the paper. Classical partition statistics often record only how many parts of a given type occur, for example, the number of odd parts. Here we refine that statistic by recording, for each odd part $\lambda_i$, the residue $\lambda_i-i \pmod e$ of the last node in row $i$ of the Young diagram of $\lambda$. This defines the \emph{odd sequence} of $\lambda$, which describes the decomposition columns labelled by $2$-balanced $e$-regular partitions in \cite{hemmerturek2026} (for $e$ a prime).
There is a striking bijection between $2$-balanced partitions and pairs of an odd sequence and an $e$-core that leads naturally to generating functions for the odd sequences that occur in a given block.

There is a `dual' version of $d$-balanced partitions: \textit{$d$-shift balanced partitions}, partitions such that all their hooks of length divisible by $e$ have arm length congruent to $-1$ modulo $d$. One can move from $d$-balanced $e$-regular partitions to $d$-shift balanced $e$-restricted partitions by applying the Mullineux map and conjugation. All our results can be easily rephrased from $d$-balanced $e$-regular to $d$-shift balanced $e$-restricted partitions, either through analogous proofs or through the duality; we comment on this in more detail at the end of Section~\ref{se:RoCK}.

Throughout this paper, we work with a fixed integer $e > 1$; all results specialize to the symmetric-group representation-theoretic setting in (prime) characteristic $p$ by taking $e=p$. For a general integer $e$, one obtains the setting for the representation theory of Hecke algebras in quantum characteristic $e$ (and an arbitrary prime characteristic $p$). This is the setting where the term 'block' comes from: while we treat blocks here as sets of partitions, their name comes from the algebraic blocks of symmetric groups -- the indecomposable summands of $KS_n$ (where $K$ is a field) considered as a bimodule over itself -- namely, if $e$ is a prime, then our blocks are the sets of partitions $\lambda$ such that the Specht module $S^{\lambda}$ is not annihilated by a given block of symmetric groups over a field of characteristic $e$. For a general integer $e>1$, one needs to use blocks of Hecke algebras instead. 

\medskip
\noindent\textbf{Main results}
Our first main theorem shows that the crystal (affine) reflections $\sigma_i$ preserve the $d$-balanced property (Theorem~\ref{thm:main}). As a consequence, the number of $d$-balanced $e$-regular partitions is the same in every block of a fixed $e$-weight (Corollary~\ref{cor:samenumbereveryblockweightw}). Our second main result computes this number explicitly: by carrying out the count in RoCK blocks, we prove that it is
\[
\binom{w + \left\lfloor\frac{e-1}{d}\right\rfloor - 1}{w}
\]
(Theorem~\ref{thm:maincountRouquier}). The proof provides an algorithmic construction of these columns: they can be explicitly constructed in RoCK blocks and then transferred to the desired blocks using the crystal reflections.

We also deduce that even the number of all $d$-balanced partitions in a given block depends only on the $e$-weight $w$ and not the $e$-core, and that it equals.
\[
\binom{w + \left\lfloor\frac{e-1}{d}\right\rfloor}{w}
\]
(Corollary~\ref{co:all d-balanced}).
In the odd-sequence setting, we further obtain generating functions for the total number of odd sequences (and some variations) occurring in a fixed block (Theorem~\ref{th:generating functions}).

\medskip
\noindent\textbf{Outline}
Section~\ref{se:partitions} consists of a summary of basic terminology and properties of partitions and James's abacus, and the (repeated) definition of $d$-balanced partitions. In Section~\ref{se:crystal} we describe the crystal (affine) reflection on partitions, both using Young diagrams and James's abacus. A reader familiar with partitions, James's abacus (and crystal reflections) should be able to only glance over Section~\ref{se:partitions} (and Section~\ref{se:crystal}). The proof of our first main result -- crystal (affine) reflections preserve the $d$-balanced property -- is the subject of Section~\ref{sec:rimhooksandarmlength}. We enumerate the $d$-balanced $e$-regular partition in Section~\ref{se:RoCK} by describing their form in RoCK blocks, and deduce the enumeration for all $d$-balanced partitions. The applications of our results to counting odd sequences are in Sections~\ref{se:odd}.

\section{Hooks, rim hooks and James's abacus}\label{se:partitions}
 
In this section we define the set of partitions we are interested in. Throughout, let $e>1$ be a fixed integer.
 
\begin{definition}
A \emph{partition} of $n$ is a weakly decreasing sequence
$\lambda = (\lambda_1 \geq \lambda_2 \geq \cdots \geq \lambda_\ell > 0)$ of positive
integers, called \textit{parts}, summing to $n$; we write $|\lambda| = n$. The \emph{Young diagram} of $\lambda$
is the set of \emph{nodes}
\[
  [\lambda] = \{(r,c) : 1 \leq r \leq \ell,\; 1 \leq c \leq \lambda_r\},
\]
drawn as a left-adjusted diagram with row $r$ containing $\lambda_r$ nodes. A partition is \emph{$e$-regular} if no
part is repeated $e$ or more times; otherwise, it is called \textit{$e$-singular}.
\end{definition}

Throughout, by partition we simply mean a partition of some integer $n\geq 0$. Occasionally, we will need a `conjugate' version of $e$-regular partition. The \textit{conjugate partition} of $\lambda$, denoted by $\lambda'$, is the partition with Young diagram $[\lambda'] = \{(r,c) : (c,r)\in [\lambda]\}$. In other words, the Young diagrams of $\lambda$ and $\lambda'$ are reflections of each other along the main diagonal. We say that $\lambda$ is \textit{$e$-restricted} if $\lambda'$ is $e$-regular. That is, $\lambda$ is $e$-restricted if its Young diagram does not contain $e$ or more columns of the same length.

We now recall definitions of hooks and rim hooks. To do so, we use an intuitive terminology for nodes. For example, the nodes lying south-east of node $(r,c)\in [\lambda]$ are the nodes $(r',c')$ with $r'\geq r$ and $c'\geq c$, among these are the nodes right of $(r,c)$ in row $r$ -- the nodes with $r'=r$ -- and the nodes below $(r,c)$ in column $c$ -- the nodes with $c'=c$. We can make all three notions strict by replacing $r'\geq r$ and/or $c'\geq c$ with their strict versions.

An example of hooks, rim hooks and all the so-far introduced terms is in Figure~\ref{fig:YD}.

\begin{figure}[ht]
    \centering
    \begin{ytableau}
    {} &  & &  &  & & & \\
    & \times  & \times  &  \times & \times \\
    & \times & &\\
    & \times\\
    \\
    \end{ytableau}
    \qquad
    \begin{ytableau}
    {} &  & &  &  & & & \\
    & & &  \times & \times \\
    & \times & \times & \times\\
    & \times\\
    \\
    \end{ytableau}
    \caption{Let $\lambda = (8,5,4,2,1)$. In the first Young diagram of $\lambda$, the nodes with the $\times$ symbol form the hook at node $(2,2)$. In the second Young diagram, these nodes form the rim hook at the same node. The hook length, arm length and leg length of both, this hook and rim hook, are $6$, $3$ and $2$, respectively. Removing the rim hook from the second diagram yields the Young diagram of $(8,3,1,1,1)$. Since $\lambda$ has no repeated part, it is $e$-regular for all $e>1$. Since its conjugate partition $\lambda' = (5,4,3,3,2,1,1,1)$ has three copies of part $1$, $\lambda$ is neither $2$- nor $3$-restricted. But it is $e$-restricted for all $e>3$.}
    \label{fig:YD}
\end{figure}

\begin{definition}
For a node $(r,c) \in [\lambda]$, the \emph{hook} at $(r,c)$ consists
of $(r,c)$ itself, the nodes strictly to its right in row $r$, and the nodes strictly
below it in column $c$. Its \emph{hook length} is
\[
  h_{rc}(\lambda) = a_{rc}(\lambda) + \ell_{rc}(\lambda) + 1,
\]
where the \emph{arm length} $a_{rc}(\lambda) = \lambda_r - c$ counts nodes strictly to the
right of $(r,c)$ in row $r$, and the \emph{leg length}
$\ell_{rc}(\lambda) = \lambda'_c - r$ counts nodes strictly below $(r,c)$ in column $c$. 
\end{definition}

Note that the hook length is just the size of the respective hook.

\begin{definition}
For a node $(r,c) \in [\lambda]$, the \emph{rim hook} at $(r,c)$ consists of those nodes south-east of $(r,c)$ which have no node strictly south-east of them. Its \textit{hook length} is its size, \emph{arm length} is the number of columns its nodes occupy minus $1$, and its \emph{leg
length} is the number of rows its nodes occupy minus $1$. 
\end{definition}

The rim hook at $(1,1)$ is called the \textit{rim} of $[\lambda]$.
Note that one obtains a rim hook at $(r,c)\in\lambda$ by appropriately projecting nodes of the hook at $(r,c)$ south and east to the rim of $[\lambda]$. This can be used to deduce the standard fact that the hook length, arm length and leg length of the rim hook at $(r,c)$ equal the corresponding quantities of the hook at $(r,c)$. As such, hooks and rim hooks can often be used interchangeably; an advantage of hooks is their simple definition, while the advantage of rim hooks is that their removal from the Young diagram yields the Young diagram of a smaller partition.

We refer to (rim) hooks of hook length $t$ as (rim) \textit{$t$-hooks}, and to (rim) hooks of hook length divisible by $e$ as (rim) \textit{$e$-divisible hooks}.
We are interested in the following class of partitions, introduced by the second author in \cite{turek2025mullineuxmapdbalancedpartitions}.
 
\begin{definition}[{\cite[Definition~1.1]{turek2025mullineuxmapdbalancedpartitions}}]\label{def:d-balanced}
Let $d>1$ be an integer.  We say that $\lambda$ is \emph{$d$-balanced} (with respect to~$e$) if the arm length of every $e$-divisible hook of $\lambda$ is congruent to $0$ modulo~$d$.  
\end{definition}

An example of a $d$-balanced partition is in Figure~\ref{fig:d-balanced}.

\begin{figure}[ht]
    \centering
    \begin{ytableau}
    \times &  & \times &  &  & &  \\
    \times &  & \times  &  &  \\
     \\
    \end{ytableau}
    \caption{Let $e=3$ and $d=2$. Partition $(7,5,1)$ is $2$-balanced since its four $3$-divisible hooks -- hooks at nodes $(1,1)$, $(1,3)$, $(2,1)$ and $(2,3)$, denoted by $\times$ in the diagram -- have arm lengths $6$, $4$, $4$ and $2$, respectively, all divisible by $2$.}
    \label{fig:d-balanced}
\end{figure}

\subsection{Abacus notation}
Our proofs will heavily use the abacus representation of partitions on~$e$ runners. We introduce it here following the exposition from \cite[Section~2.7]{JamesKerber} (with a slight exception of using infinitely many beads).
 
\begin{definition}
Let $\lambda=(\lambda_1,\lambda_2,\dots, \lambda_t)$ be a partition. Fix an integer $r$.
The \emph{$\beta$-set} of $\lambda$ (with respect to $r$) is the set
\[
B_r(\lambda)
=
\{\lambda_i-i+r \mid i \in \mathbb{N}\}
\]
where we take $\lambda_i$ to be zero for $i>t$. We set  $\beta_i=\lambda_i-i+r$, so that $\beta_1>\beta_2>\dots$. The partition may be recovered from the $\beta$-set by $\lambda_i=\beta_i-(r-i)$.
  
The \emph{$e$-abacus} of $\lambda$ (with respect to $r$) has runners labelled $0, 1, \ldots, e-1$ and drawn vertically.
Runner $i$ contains positions $\ldots, i-e,i,\, i+e,\, i+2e,\, \ldots$ from top to bottom; position $ke + i$ is at \emph{level} $k$ on runner $i$. A position is \emph{occupied} (a bead) if it belongs to
$B_r(\lambda)$, and \emph{vacant} (a gap) otherwise.
\end{definition}

\begin{remark}\label{re:beta-set}
We mention some standard properties of the $e$-abacus of a partition.
\begin{enumerate}
    \item When $r$ is the number of parts of $\lambda$, the corresponding $\beta$ set is the set $\{h_{j1}(\lambda)\}$ of first-column hook lengths (which we take to be $0$ if $j>r$).
    \item Changing $r$ by a multiple of $e$ slides all runners simultaneously and does not affect the combinatorial invariants defined below.
    \item Notice every $\beta$-set contains $\mathbb{Z}\cap(-\infty,u)$ for some $u$, so each runner has infinitely many beads at the top (that is, at all sufficiently small levels), which we do not display. With a slight exception in Section~\ref{se:RoCK}, henceforth we assume that our abaci have a multiple of $e$ displayed beads. This will usually be done by taking $r=0$; we refer to the corresponding $\beta$-set and $e$-abacus of $\lambda$ as \textit{canonical}.
    \item There is a quick way to find $\lambda$ from its $e$-abacus. For each occupied position, we count the number of vacant lower-numbered positions. Arranging the non-zero entries in a weakly decreasing order recovers $\lambda$. As such, $\lambda$ is $e$-regular if and only if it does not contain beads at $e$ consecutive positions and a gap at a lower-numbered position.
\end{enumerate}
\end{remark}

The positions of the $e$-abacus are arranged as follows:

\[
\begin{array}{cccc}
\vdots&\vdots&&\vdots\\
0 & 1 & \cdots & e-1 \\
e & e+1 & \cdots & 2e-1 \\
2e & 2e+1 & \cdots & 3e-1 \\
\vdots & \vdots & & \vdots
\end{array}
\]
The displayed levels are levels $0$, $1$ and $2$ (from top to bottom), and the displayed runners are $0$, $1$ and $e-1$ (from left to right).

Given a bead on runner $i$ at level $k$ and a gap on the same runner at level $k-1$, we can \textit{slide up} this bead by changing the levels $k$ and $k-1$ on runner $i$ to a gap and a bead, respectively. Formally, we replace $ke+i$ in the underlying $\beta$-set by $ke+i-e$. If we start with an $e$-abacus of a partition of $n$, sliding a bead up produces a $e$-abacus of a partition of $n-e$ (with respect to the same integer $r$) (compare with Lemma~\ref{lem:hook}). The inverse process, \textit{sliding down} a bead, enjoys similar properties.

It is clear that changing the underlying integer $r$ commutes with sliding beads up, more precisely, whether we can get from an $e$-abacus of a partition $\lambda$ with respect to $r$ to an $e$-abacus of a partition $\mu$ with respect to $r$ by sliding a bead up is independent of the choice of $r$. This justifies the following definition. In it (and throughout the rest of the paper), we may identify $\lambda$ with its $e$-abacus, if there is no risk of confusion.

\begin{definition}\label{def:core}
The \emph{$e$-core} of $\lambda$ is obtained by sliding the beads of $\lambda$ up as much as possible (i.e. until there is no more bead to slide up). The \emph{$e$-weight} of $\lambda$ is $(|\lambda| - |\gamma|)/e$, where $\gamma$ is the $e$-core of $\lambda$. A \emph{block} is a set of partitions with given $e$-core and $e$-weight (called the $e$-core and $e$-weight of the block).
\end{definition}

\begin{remark}\label{re:rim hook removal}
    We see later in Lemma~\ref{lem:hook} that the $e$-core of $\lambda$ can be obtained by removing rim $e$-hooks in the Young diagram of $\lambda$ until there are none present. In particular, if $\mu$ is obtained from $\lambda$ by removing a rim $e$-hook, then they have the same $e$-core and, using the above definition, the $e$-weight of $\mu$ is one less than the $e$-weight of $\lambda$. We will use these facts in Section~\ref{se:odd}.
\end{remark}

Since sliding a bead up decreases the size of a partition by $e$, one can think of the $e$-weight of $\lambda$ as the number of slides made when getting from $\lambda$ to its $e$-core. It is therefore clear that sliding a bead up preserves the $e$-core and decreases the $e$-weight by one.
 
\begin{example}\label{ex:abacus}
We illustrate Definition~\ref{def:core} with $e = 3$ and $\lambda=(7,5,4,3,3,2,1)$. Choosing $r=9$ we get the $\beta$-set $B_9(\lambda)=\{\cdots, -2,-1,0,1,3,5,7,8,10,12,15\}.$ On the abacus we represent $\lambda$ with 9 beads as:
 
\begin{center}
\abacus(lmr,bbn,bnb,nbb,nbn,bnn,bnn,vvv).
\end{center}
Sliding the beads up, we get the abacus representation for the $3$-core, which represents the partition $(1)$:
\begin{center}
\abacus(lmr,bbb,bbb,bbn,bnn,vvv).
\end{center}
As a total of $8$ slides were made, the $e$-weight of $\lambda$ is $8$.
\end{example}
 
\section{Crystal reflections}\label{se:crystal}
The number of $e$-regular partitions in a block of $e$-weight $w$ does not depend on the $e$-core, and equals the number of $(e-1)$-tuples of partitions whose sizes add up to $w$ \cite[Theorem 6.2.2]{JamesKerber}. In this section we recall the well-known crystal operators, which give bijections between $e$-regular partitions in blocks of the same $e$-weight with different $e$-cores. An alternative formulation does the same for $e$-restricted partitions in a block. The original formulation of this is in \cite[Theorem~4.7]{misramiwa}, see also \cite{FayersPartitionModelsCrystal} for the $e$-restricted version. The combinatorial definition of these crystal operators can be generalized to all partitions; however, we stick with the definition for $e$-regular partitions, and use this self-imposed restriction to study $d$-balanced $e$-singular partitions in more detail when counting all $d$-balanced partitions in a block (see Lemma~\ref{le:add a column}).
 
The definitions below are taken from \cite[Chapter~11]{kleshchev}. The \textit{$e$-residue} of node $(r, c)$ is the remainder of $(c - r)$ modulo $e$ (between $0$ and $e-1$). Here and in the rest of the paper, a node is any pair of positive integers (not necessarily an element of a Young diagram).
 
\begin{definition}
Let $\lambda$ be a partition and $i\in \{0, 1, \dots, e-1\}$. A node of $e$-residue $i$ is
\emph{$i$-removable} (respectively \emph{$i$-addable}) if removing it from (respectively adding it to)
$[\lambda]$ gives a Young diagram of a partition.
\end{definition}

\begin{definition}
Let $\lambda$ be a partition. Label all its $i$-addable nodes by $+$ and its $i$-removable nodes by $-$. Then the \emph{$i$-signature} of $\lambda$ is the sequence of pluses and minuses obtained by going along the rim of the Young diagram from bottom left to upper right and reading off the signs. The \emph{reduced $i$-signature} is obtained by successively deleting all neighboring pairs of the form $-+$ until none remain.
\end{definition}

Since any $-$ immediately followed by $+$ is cancelled, the reduced $i$-signature
always has the form
\[
  \underbrace{+\cdots+}_{a} \underbrace{-\cdots-}_{b}, \qquad a,b \geq 0.
\]
In Figure~\ref{fig:isignature} we determine all (reduced) $i$-signatures for 
$\lambda=(14,11,10,7,5,4,1)$ and $e=3$. The relevant $3$-residues are labelled.
 
\begin{figure}[ht]
    \centering
    \Yinternals0\Yaddables1
    $\yngres(3,14,11,10,7,5,4,1)$
    \caption{Unreduced $1$-signature (bottom to top)
    ${\color{red}+}{-}{+}{\color{red}+}{-}{+}{\color{red}-}$,
     reduces to ${\color{red}+}{\color{red}+}{\color{red}-}$. The $2$-signature $++++$ and $0$-signature $----$ are already reduced.}
    \label{fig:isignature}
\end{figure}
 
Removable nodes corresponding to a ``$-$'' in the reduced $i$-signature are called 
\emph{$i$-normal}, and addable nodes corresponding to a ``$+$'' are called 
\emph{$i$-conormal}. 
 
In Kleshchev's language~\cite{kleshchev}, if it exists, the $i$-normal node corresponding to the leftmost $-$ is called 
\emph{$i$-good} and the $i$-conormal node corresponding to the rightmost $+$ is called \emph{$i$-cogood}, or \emph{good addable}.
In Figure~\ref{fig:isignature} the reduced $1$-signature is highlighted in red (i.e. it is given by the first, fourth and the last symbol of the $1$-signature). The node $(1,14)$ is $1$-good and the node $(4,8)$ is good addable.
 
For an $i$-removable node $A$ in a partition $\lambda$ we let $\lambda_A$ be $\lambda$ with the node $A$ removed and similarly for $B$ an $i$-addable node we let $\lambda^B$ be $\lambda$ with the node $B$ added (identifying partitions with their Young diagrams throughout).
 
We are now ready to define the crystal reflections \cite[11.1]{kleshchev}. Let $\varepsilon_i(\lambda)$ be the number of $i$-normal nodes in $[\lambda]$ and $\varphi_i(\lambda)$ the number of $i$-conormal nodes. These are also the number of minuses and pluses, respectively, in the reduced $i$-signature of $\lambda$.  Let 
 
\begin{align*}
\tilde{e}_i(\lambda) &= \begin{cases}
    \lambda_A & \text{if } \varepsilon_i(\lambda) > 0 \text{ and } A \text{ is the (unique) } i\text{-good node,} \\
    0        & \text{if } \varepsilon_i(\lambda) = 0,
\end{cases} \\[10pt]
\tilde{f}_i(\lambda) &= \begin{cases}
    \lambda^B & \text{if } \varphi_i(\lambda) > 0 \text{ and } B \text{ is the (unique) } i\text{-cogood node,} \\
    0        & \text{if } \varphi_i(\lambda) = 0.
\end{cases}
\end{align*}
 
So for our example above we see that $$\tilde{e}_1(14,11,10,7,5,4,1)=(13,11,10,7,5,4,1)$$ and $$\tilde{f}_1(14,11,10,7,5,4,1)=(14,11,10,8,5,4,1).$$
 
\begin{definition}
\label{def:reflections}
Let $\lambda$ be $e$-regular and $i \in \{0,1,\ldots, e-1\}$. Define the \textit{affine $i$-reflection}:
\begin{align*}
\sigma_i(\lambda) &= \begin{cases}
    \tilde{f}_i^{\varphi_i(\lambda)-\varepsilon_i(\lambda)}(\lambda) & \text{if } \varepsilon_i(\lambda)< \varphi_i(\lambda),\\
  \tilde{e}_i^{\varepsilon_i(\lambda)-\varphi_i(\lambda)}(\lambda) & \text{if } \varepsilon_i(\lambda)> \varphi_i(\lambda),\\
  \lambda & \text{if } \varepsilon_i(\lambda) = \varphi_i(\lambda).
\end{cases} \\[10pt]
\end{align*}
\end{definition}
 
In short, in the first case, $\sigma_i$ adds nodes corresponding to the $\varphi_i(\lambda)-\varepsilon_i(\lambda)$ rightmost $+$'s in the reduced $i$-signature, in the second case it removes nodes corresponding to the $\varepsilon_i(\lambda)-\varphi_i(\lambda)$ leftmost $-$'s in the reduced $i$-signature, and it does nothing in the third case. The following is well known, see for example \cite{misramiwa} or Kleshchev's book \cite{kleshchev}:

\begin{proposition}\label{prop:bijection}
The affine $i$-reflection $\sigma_i$ gives an involutive bijection between $e$-regular partitions
of $e$-weight $w$ in the block with $e$-core $\gamma$ and those in the block with $e$-core
$\sigma_i(\gamma)$, where $\sigma_i(\gamma)$ is obtained from $\gamma$ by swapping runners
$i-1$ and $i$ in its $e$-abacus representation.
\end{proposition}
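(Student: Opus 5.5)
The plan is to translate the crystal operators into James's abacus, and to check the block statement and the bijection statement separately. Take a canonical abacus, so $\beta_j=\lambda_j-j$. The last node of row $j$ has residue $\lambda_j-j\equiv\beta_j \pmod e$. Hence an $i$-removable node in row $j$ corresponds to a bead on runner $i$ whose predecessor position $\beta_j-1$, on runner $i-1$, is vacant; removing the node moves that bead one position left onto runner $i-1$. Similarly, an $i$-addable node corresponds to a bead on runner $i-1$ at position $x$ with $x+1$ (on runner $i$) vacant; adding the node moves the bead right onto runner $i$. (For $i=0$ the level shifts by one; this is a harmless bookkeeping change.) Reading the rim from bottom left to top right is the same as reading positions in increasing order, level by level.

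Next I compute the $i$-signature level by level. At level $k$, compare runner $i-1$ and runner $i$, with the appropriate shift when $i=0$. If both positions are beads, or both are gaps, they contribute nothing. A bead on runner $i-1$ over a gap on runner $i$ contributes $+$, and a gap on runner $i-1$ over a bead on runner $i$ contributes $-$. The signature is therefore the word, read with level increasing, recording the levels at which the two runners differ.

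The reduced signature $+^a-^b$ arises by cancelling adjacent pairs $-+$ (a $-$ at a lower level followed by a $+$ at a higher level). The reflection $\sigma_i$ changes the unmatched signs so that the result has reduced signature $+^b-^a$. Concretely, $\sigma_i$ turns the last $a-b$ unmatched pluses into minuses, or the first $b-a$ unmatched minuses into pluses.

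\emph{Block claim.} The cancelled pairs $-+$ occur at two levels $k<k'$. Swapping the contents of the two runners at both of these levels would move one bead down a level and another up, so the core is unaffected. The key point is the effect on the $e$-core, obtained by sliding all beads up on each runner. It suffices to count beads: the $e$-core is determined by the number of beads on each runner relative to the canonical count. Every $+\to-$ change moves a bead from runner $i-1$ to runner $i$, and every $-\to+$ change does the reverse. After $\sigma_i$, the net change in beads on runner $i$ is $a-b$. Since $a-b$ equals the difference (bead count on runner $i-1$) minus (bead count on runner $i$), the two runners' bead counts are exchanged. This is exactly the abacus of $\gamma$ with runners $i-1$ and $i$ swapped. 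Since the number of nodes changes by $|a-b|$ and $|\sigma_i(\gamma)|-|\gamma|$ changes by the same amount, the weight $w$ is preserved.

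\emph{Bijection and involution.} The unmatched signs after $\sigma_i$ sit at the same positions, so the cancellation pattern with the matched pairs is unchanged. Applying $\sigma_i$ twice therefore returns $\lambda$. I also need $\sigma_i(\lambda)$ to remain $e$-regular; this is the expected main obstacle. I would use Kleshchev's theorem that $\tilde e_i,\tilde f_i$ preserve $e$-regular partitions (the crystal $B(\Lambda_0)$), which the paper cites. An abacus check of this directly is delicate: it requires that no $e$ consecutive beads lie above a gap. I would cite Misra--Miwa and Kleshchev for this step rather than reprove it. Since $\sigma_i$ is an involution, and maps $e$-regular partitions of block $(\gamma,w)$ into block $(\sigma_i(\gamma),w)$ and back, it is a bijection.
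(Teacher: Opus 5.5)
Your proof is correct, but it does not follow the paper's route, because the paper gives no proof here: it quotes the proposition as well known from Misra--Miwa and Kleshchev's book. You instead prove most of it directly. You derive the core swap, the invariance of the weight, and involutivity from the level-by-level $i$-signature on runners $i-1$ and $i$, which is the same dictionary the paper uses in the proof of Theorem~\ref{thm:main}. You cite the literature only for the fact that $\tilde e_i,\tilde f_i$ preserve $e$-regularity.

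What your route buys is a clean separation between the elementary part and the crystal-theoretic input. Two elementary facts do most of the work:
\begin{enumerate}
\item Bracket matching: flipping the rightmost unmatched $+$, or the leftmost unmatched $-$, never disturbs the other cancellations.
\item Bead counting: the signed number of signs, $a-b$, equals the normalized difference of bead counts on the two runners.
\end{enumerate}
Together these show that $\sigma_i$ is an involution taking the block $(\gamma,w)$ to $(\sigma_i(\gamma),w)$ for arbitrary partitions. This matches the paper's remark that the combinatorial definition extends to all partitions, and it isolates $e$-regularity as the only input needing more. The citation, by contrast, buys brevity.

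Two small repairs are needed.
\begin{enumerate}
\item Drop the sentence about swapping contents at a cancelled pair. It is irrelevant, since $\sigma_i$ never touches matched levels, and it is also inaccurate: such a swap moves a bead sideways, not up or down.
\item Justify $|\sigma_i(\gamma)|-|\gamma|=a-b$. In the flush core the two runners differ at exactly $|a-b|$ levels, all carrying the same sign. So the runner swap consists of $|a-b|$ one-position moves in the same direction as for $\lambda$.
\end{enumerate}

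Finally, the regularity step is shorter than you fear, and including it makes your proof fully self-contained.
\begin{enumerate}
\item Suppose $\tilde f_i$ moves the bead at $x=ke+i-1$ to $x+1$ and the result is $e$-singular. The only possible violation is at the new gap $x$, so $x+2,\dots,x+e$ are beads of $\lambda$. Since $x+1$ is a gap of the $e$-regular $\lambda$, position $x+e+1$ must be a gap. Then level $k+1$ carries a $+$ immediately after the unmatched $+$ at level $k$, and this $+$ is itself unmatched, contradicting cogoodness.
\item Dually, a violation created by $\tilde e_i$ at level $k$ forces either $\lambda$ to be $e$-singular or an unmatched $-$ at level $k-1$, contradicting goodness.
\end{enumerate}
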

 
By applying appropriate sequences of these reflections (commonly called \textit{crystal affine reflections}) we can move from one $e$-core to any other:
 
\begin{proposition}\label{prop:crystal-bijection}
Let $B$ and $B'$ be two blocks of common $e$-weight~$w$. There is a bijection between the sets of $e$-regular partitions in $B$ and $B'$, given by a suitable composition of crystal affine reflections.
\end{proposition}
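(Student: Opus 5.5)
We prove Proposition~\ref{prop:crystal-bijection} by composing the single-step bijections of Proposition~\ref{prop:bijection}. The key point is that the affine reflections act on $e$-cores by swapping adjacent runners of the $e$-abacus, and these swaps generate enough of the affine Weyl group to connect any two $e$-cores.

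First we fix the abacus conventions. We use the canonical abacus ($r=0$). Then $\sigma_i$ for $i\neq 0$ swaps runners $i-1$ and $i$ without changing the levels. For $i=0$, "runner $-1$" is read as runner $e-1$ shifted by one level. Concretely, position $ke-1$ is identified with level $k-1$ on runner $e-1$, so $\sigma_0$ swaps runner $0$ with runner $e-1$ and shifts beads by one level in the appropriate direction.

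An $e$-core is determined by the vector $(b_0,\dots,b_{e-1})$ of relative bead counts on the runners. Here $b_j$ is the number of beads on runner $j$ minus the number of beads on runner $0$ above a fixed level, normalized so that $\sum_j b_j=0$ for the canonical abacus. With this normalization, $e$-cores correspond bijectively to integer vectors in the sum-zero lattice. Under this correspondence:
\begin{itemize}
\item $\sigma_1,\dots,\sigma_{e-1}$ act as the adjacent transpositions, generating $S_e$;
\item $\sigma_0$ acts as the transposition of the first and last coordinates composed with the translation by $\pm(1,0,\dots,0,-1)$.
\end{itemize}
These maps generate the affine Weyl group of type $\tilde A_{e-1}$, namely $S_e\ltimes Q$, where $Q$ is the sum-zero lattice. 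This group acts transitively on $Q$, because the translations alone already do so: a translation by a root $\epsilon_a-\epsilon_b$ is obtained by conjugating the translation part of $\sigma_0\sigma_{\text{reflection}}$ by permutations. Hence some word $\sigma_{i_k}\cdots\sigma_{i_1}$ sends the $e$-core $\gamma$ of $B$ to the $e$-core $\gamma'$ of $B'$.

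It remains to compose the bijections. Let $\gamma_0=\gamma$ and $\gamma_j=\sigma_{i_j}(\gamma_{j-1})$, so that $\gamma_k=\gamma'$. By Proposition~\ref{prop:bijection}, each $\sigma_{i_j}$ is a bijection from the $e$-regular partitions of weight $w$ with $e$-core $\gamma_{j-1}$ to those with $e$-core $\gamma_j$. The composite is therefore a bijection from the $e$-regular partitions in $B$ to those in $B'$.

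The main obstacle is the transitivity step. It is cleanest to avoid the Weyl group and give a direct reduction instead: show that every $e$-core can be brought to the empty core. If the core $\gamma$ is nonempty, its canonical abacus has some runner $i$ whose beads reach strictly lower than those on the preceding runner $i-1$ (cyclically, with the level shift at runner $0$). Otherwise the bead counts are weakly decreasing around the cycle with total shift zero, which forces all counts to be equal, i.e.\ $\gamma=\varnothing$. Applying $\sigma_i$ in this situation swaps the two runners so that the longer runner comes first, and this strictly decreases $|\gamma|$, since fewer beads now sit past gaps.

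Induction on $|\gamma|$ therefore connects every core to $\varnothing$. Because each $\sigma_i$ is an involution, we can reverse the path from $\varnothing$ to $\gamma'$ and concatenate it with the path from $\gamma$ to $\varnothing$. This connects $\gamma$ to $\gamma'$.

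The only routine check needed is that the swap of runners $i-1$ and $i$ actually decreases the size of the core. This follows from the description in Remark~\ref{re:beta-set}(4): the size is the total count, over beads, of vacant lower-numbered positions, and swapping the runners so that more beads lie on runner $i-1$ than on runner $i$ reduces this count by exactly (difference of bead counts) inversions.
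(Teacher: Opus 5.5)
Your proposal is correct and takes the same route as the paper, which obtains Proposition~\ref{prop:crystal-bijection} by composing the single-step bijections of Proposition~\ref{prop:bijection} along a chain of $e$-cores. The only difference is that the paper asserts without proof, as well known, that runner swaps connect any two $e$-cores, whereas you prove this explicitly (by the size-decreasing induction down to the empty core, or alternatively via the translation lattice of $\tilde A_{e-1}$), and both of your arguments are sound.
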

 
\subsection{Crystal reflections on the abacus}
 
Recall that we represent partitions on the abacus with a multiple of $e$ beads (that is, using an $e$-abacus with respect to $r$, divisible by $e$). In this case, the $i$-signature is determined entirely by runners $i-1$ and $i$ (taken modulo~$e$). At each level $k$, the occupancy of positions $ke+(i-1)$ and $ke+i$ contributes
as follows.
 
\begin{center}
\renewcommand{\arraystretch}{1.2}
\begin{tabular}{ccc}
  \toprule
  Runner $i-1$ & Runner $i$ & Contribution \\
  \midrule
  bead   & vacant & $+$ \\
  vacant & bead   & $-$ \\
  bead   & bead   & (none) \\
  vacant & vacant & (none) \\
  \bottomrule
\end{tabular}
\end{center}
 
Reading in order of increasing level gives the $i$-signature. We see that the levels at which both runners
agree are `invisible' to the signature and play no role in the cancellation. If $i=0$, then one has to compare levels $k$ on runner $0$ with levels $k-1$ on runner $e-1$; to unify our arguments, for $i=0$, we let the level $k$ on runner $i-1=-1$ be the level $k-1$ on runner $e-1$ (which is the position $ke+(i-1)$).
 
\begin{example}\label{ex:isig}
Continuing with $\lambda$ in Example~\ref{ex:abacus} with $e = 3$ and $i = 2$ (so we look at runners $1$ and $2$), the contributions level by level are:
 
\begin{center}
\renewcommand{\arraystretch}{1.2}
\begin{tabular}{cccc}
  \toprule
  Level & Runner $1$ & Runner $2$ & Contribution \\
  \midrule
  $0$ & bead   & vacant & $+$ \\
  $1$ & vacant & bead   & $-$ \\
  $2$ & bead   & bead   & (none) \\
  $3$ & bead   & vacant & $+$ \\
   $4$ & vacant   & vacant & (none) \\
    $5$ & vacant   & vacant & (none) \\
  \bottomrule
\end{tabular}
\end{center}
 
The unreduced $2$-signature is $+\,-\,+$. The $-$ at level~$1$ and $+$ at level~$3$
cancel, leaving reduced $2$-signature as $+$ (from level~$0$), so $\varepsilon_i(\lambda) = 0$ and $\varphi_i(\lambda) = 1$.
\end{example}

To describe the effect of $\sigma_i$ using an abacus, if level $k$ on runners $i-1$ and $i$ is a gap and a bead, respectively, we define \textit{left move} of the bead at $ke+i$ as making level $k$ on runners $i-1$ and $i$ a bead and a gap, respectively (that is, replacing $ke+i$ by $ke+i-1$ in the underlying $\beta$-set); this creates an $e$-abacus of a new partitions (with respect to the same integer $r$). Reversing the process, we obtain the \textit{right move} of the bead at $ke+i-1$. When implicit from the context, we may omit the position or/and the words `left' and `right'.

The description of $\sigma_i$ is now easy: if $\varepsilon_i(\lambda) < \varphi_i(\lambda)$, we move right the beads at runner $i-1$ at the $\varphi_i(\lambda) - \varepsilon_i(\lambda)$ least levels contributing by $+$ to the reduced $i$-signature, if $\varepsilon_i(\lambda) > \varphi_i(\lambda)$, we move left the beads at runner $i$ at the $\varepsilon_i(\lambda) - \varphi_i(\lambda)$ greatest levels contributing by $-$ to the reduced $i$-signature. (And we do nothing if $\varphi_i(\lambda) = \varepsilon_i(\lambda)$.)

\begin{example} 
Continuing Example~\ref{ex:isig}, the abacus on the left below shows $\lambda$ with the bead to be moved (runner~$1$,
level~$0$) marked with an open circle; the abacus on the right shows $\sigma_2(\lambda)$
after the affine $2$-reflection.
 
\begin{center}
\begin{minipage}[t]{0.3\textwidth}
  \centering
  \abacus(lmr,bon,bnb,nbb,nbn,bnn,bnn,vvv)
 
  \smallskip
  {\small $\lambda$: reduced $2$-signature is $+$}
\end{minipage}
\hspace{2.5cm}
\begin{minipage}[t]{0.3\textwidth}
  \centering
 \abacus(lmr,bno,bnb,nbb,nbn,bnn,bnn,vvv)
 
  \smallskip
  {\small $\sigma_2(\lambda)$: reduced $2$-signature is $-$}
\end{minipage}
\end{center}
 
In $\sigma_2(\lambda)$, the new $2$-signature is $-\,-\,+$, and after cancellation of the
$-\,+$ at levels $1$ and $2$, the reduced signature is $-$ (from level~$0$), giving
$\varepsilon_i(\sigma_i(\lambda)) = 1$ and $\varphi_i(\sigma_(\lambda)) = 0$; thus $\sigma_i^2(\lambda)=\lambda$, as expected.
\end{example}
 
\section{Preservation of the $d$-balanced property}
\label{sec:rimhooksandarmlength}
We are now ready to prove that the crystal reflections preserve the $d$-balanced property. We will use the abacus description of the crystal reflections. To do so we need to recognize $e$-divisible hooks and their arm lengths from the abacus; this is easy to derive (see \cite[Section~2.7]{JamesKerber} for a standard reference). Henceforth we write $\vac{u}{v}{\lambda}$ for the number of vacant positions strictly between positions $u$ and $v$ in the canonical $e$-abacus representation of $\lambda$ (this is different from the conventions for $\emp_B(u,v)$ in \cite{hemmerturek2026, turek2025mullineuxmapdbalancedpartitions}).
 
\begin{lemma}\label{lem:hook}
There is a bijection between $e$-divisible hooks of $\lambda$ and pairs of a bead and a gap on the same runner of the canonical $e$-abacus of a partition $\lambda$, with the gap lying on a lower level than the bead. If they lie on runner $i$, $k$ is the level of the bead, and $k'<k$ is the level of the gap, then the corresponding hook and arm lengths are $(k-k')e$ and $\vac{k'e+i}{ke+i}{\lambda}$, respectively. Moreover, replacing this bead by a gap and this gap by a bead corresponds to the removal of the corresponding rim $e$-divisible hook.
\end{lemma}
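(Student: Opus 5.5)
We prove Lemma~\ref{lem:hook} by the standard translation between nodes of $[\lambda]$ and bead--gap pairs in the $\beta$-set. Work with the canonical $\beta$-set $B=B_0(\lambda)=\{\beta_j=\lambda_j-j\}$. First we establish the classical bijection between \emph{all} nodes of $[\lambda]$ and pairs $(x,y)$ with $x\in B$, $y\notin B$, $y<x$. It is given by the following rule: the node $(r,c)$ corresponds to the bead $x=\beta_r$ and the gap $y=x-h_{rc}(\lambda)$.

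To see that this works, fix the bead $\beta_r$. The gaps below $\beta_r$ are exactly the integers $\lambda_m - m +1$ through $\lambda_{m-1}-(m-1)-1$ for $m>r$, i.e.\ the values $\beta_r - h$ with $h$ running over the hook lengths $h_{rc}(\lambda)$, $1\le c\le\lambda_r$. This uses the fact that the hook lengths in row $r$ are $\{\beta_r-\beta_m : m>r\}$ complemented in $\{1,\dots,\beta_r-\beta_{\ell}\}$-type intervals. More concretely, for the gap $y$ below $\beta_r$, let $m$ be minimal with $\beta_m<y$. Then the column $c$ is determined by $c-1=\#\{\text{gaps}<\beta_r\}-\#\{\text{gaps}\le y\}+\dots$; we verify directly that $h_{rc}=\beta_r-y$.

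Counting gaps between $y$ and $x$ is the key identity. The number of beads strictly between $y$ and $x$ equals $\ell_{rc}(\lambda)$, namely the rows $r+1,\dots,\lambda'_c$. Since $x-y-1 = h_{rc}-1 = a_{rc}+\ell_{rc}$, the number of gaps strictly between them is $a_{rc}(\lambda)$.

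Restricting to $e$-divisible hooks, $x-y=h$ is divisible by $e$ exactly when $x$ and $y$ lie on the same runner $i$. Writing $x=ke+i$ and $y=k'e+i$ gives $h=(k-k')e$ and arm length $\vac{k'e+i}{ke+i}{\lambda}$, which is the bijection and the formulas claimed.

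For the rim hook statement, replacing $x$ by $y$ in $B$ produces the $\beta$-set of a partition $\mu$ with $|\mu|=|\lambda|-h$. One checks that the rows affected are exactly $r,\dots,r+\ell_{rc}$. The new parts are obtained as follows: the beads strictly between $y$ and $x$ each shift down by one index, so rows $r+1,\dots,r+\ell$ become $\lambda_{j}-1$ placed in row $j-1$, and the last of these rows receives $y+(r+\ell)$. This matches the row lengths of $[\lambda]$ with the rim hook at $(r,c)$ removed, since that rim hook occupies the end of rows $r,\dots,r+\ell$ and removing it shifts the boundary one step up and left.

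The main obstacle is the bookkeeping in the node-to-pair bijection (the gaps-below-a-bead correspondence). We will handle it by the cleaner boundary-path picture: read the rim of $[\lambda]$ from bottom-left to top-right as a $0/1$ sequence indexed by $\mathbb Z$, with a vertical step as a bead and a horizontal step as a gap. A node $(r,c)$ then corresponds to the pair consisting of the horizontal step below column $c$ and the vertical step at the end of row $r$. Arm and leg lengths count the horizontal and vertical steps in between, and rim hook removal swaps the two steps. This makes all three claims immediate.
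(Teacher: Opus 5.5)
Your proposal is correct and is the standard $\beta$-set (boundary-path) argument that the paper does not write out but defers to \cite[Section~2.7]{JamesKerber}. A node $(r,c)$ corresponds to the gap $c-1-\lambda'_c$ and the bead $\lambda_r-r$; the gaps and beads strictly between them number $a_{rc}$ and $\ell_{rc}$; and swapping them removes the rim hook. The one blemish is the half-written column formula in your second paragraph. As displayed, $\#\{\text{gaps}<\beta_r\}-\#\{\text{gaps}\le y\}$ equals $\lambda_r-c=a_{rc}$, not $c-1$; the clean identity is $c-1=\#\{\text{gaps}<y\}$. Your boundary-path paragraph establishes the bijection on its own, so nothing is lost.
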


\begin{remark}\label{re:Other r}
    There is nothing special about the canonical $e$-abacus; one can fix any integer $r$ and state the same results for $e$-abaci with respect to $r$.
\end{remark}

With $k,k'$ and $i$ as in Lemma~\ref{lem:hook}, we say that the corresponding $e$-divisible hook is \textit{on runner $i$} and goes \textit{from level $k$ to level $k'$}.
 
In Figure \ref{figure:samplereflectionabacus} we give two examples of what runners $i-1$ and $i$ may look like before applying the reflection $\sigma_i$. On the left, the $i$-signature is  $+,+,-,+,+,-,-,+$ and the reduced $i$-signature is ${\color{red}+\,+\,+\,-}$. Two beads (levels $2$ and $5$, open circles) move right under $\sigma_i$ (from runner $i{-}1$ to runner $i$), reflecting the addition of two $i$-conormal nodes to the Young diagram. On the right, the $i$-signature is $+,-,+,-,-,-,-,+$ and the reduced $i$-signature is ${\color{red}+\,-\,-\,-}$. Two beads (levels $4$ and $5$, open circles) move left under $\sigma_i$ (from runner $i$ to runner $i{-}1$), reflecting the removal of two $i$-normal nodes from the Young diagram.

\begin{figure}[ht]
\centering
\begin{minipage}[t]{0.44\textwidth}
\centering
\begin{tikzpicture}[>=Stealth, scale=1.0]
 
  \def\rs{0.72}
  \def\rw{1.9}
  \def\br{0.14}
  \def\dl{0.11}
  \def\rx{\rw+0.58}
 
  \draw[thick] (0,   0.1) -- (0,   -8.3*\rs);
  \draw[thick] (\rw, 0.1) -- (\rw, -8.3*\rs);
 
  \node[font=\small] at (0,    1.05*\rs) {$i{-}1$};
  \node[font=\small] at (\rw,  1.05*\rs) {$i$};
 
  \node at (0,    0.55*\rs)  {$\vdots$};
  \node at (\rw,  0.55*\rs)  {$\vdots$};
  \node at (0,   -8.75*\rs)  {$\vdots$};
  \node at (\rw, -8.75*\rs)  {$\vdots$};
 
  \node[font=\scriptsize\itshape] at (1.2*\rx, 0.55*\rs) {$i$-signature};

  \draw (-\dl, 0) -- (\dl, 0);
  \draw (\rw-\dl, 0) -- (\rw+\dl, 0);

  \fill (0, -\rs) circle (\br);
  \draw (\rw-\dl, -\rs) -- (\rw+\dl, -\rs);
  \node[font=\small, red] at (\rx, -\rs) {$+$};

  \fill[white] (0, -2*\rs) circle (\br);
  \draw        (0, -2*\rs) circle (\br);
  \draw (\rw-\dl, -2*\rs) -- (\rw+\dl, -2*\rs);
  \draw[->, thick, blue!75!black]
    (\br+0.04, -2*\rs) -- (\rw-\br-0.04, -2*\rs);
  \node[font=\small, red] at (\rx, -2*\rs) {$+$};

  \draw (-\dl, -3*\rs) -- (\dl, -3*\rs);
  \fill (\rw, -3*\rs) circle (\br);
  \node[font=\small] at (\rx, -3*\rs) {$-$};

  \fill (0, -4*\rs) circle (\br);
  \draw (\rw-\dl, -4*\rs) -- (\rw+\dl, -4*\rs);
  \node[font=\small] at (\rx, -4*\rs) {$+$};
 
  \draw[decorate, decoration={brace, amplitude=4pt}]
    (\rw+0.95, -3*\rs+0.06) -- (\rw+0.95, -4*\rs-0.06)
    node[midway, right=5pt, font=\scriptsize] {cancel};

  \fill[white] (0, -5*\rs) circle (\br);
  \draw        (0, -5*\rs) circle (\br);
  \draw (\rw-\dl, -5*\rs) -- (\rw+\dl, -5*\rs);
  \draw[->, thick, blue!75!black]
    (\br+0.04, -5*\rs) -- (\rw-\br-0.04, -5*\rs);
  \node[font=\small, red] at (\rx, -5*\rs) {$+$};

  \draw (-\dl, -6*\rs) -- (\dl, -6*\rs);
  \fill (\rw, -6*\rs) circle (\br);
  \node[font=\small, red] at (\rx, -6*\rs) {$-$};

  \draw (-\dl, -7*\rs) -- (\dl, -7*\rs);
  \fill (\rw, -7*\rs) circle (\br);
  \node[font=\small] at (\rx, -7*\rs) {$-$};

  \fill (0, -8*\rs) circle (\br);
  \draw (\rw-\dl, -8*\rs) -- (\rw+\dl, -8*\rs);
  \node[font=\small] at (\rx, -8*\rs) {$+$};
 
  \draw[decorate, decoration={brace, amplitude=4pt}]
    (\rw+0.95, -7*\rs+0.06) -- (\rw+0.95, -8*\rs-0.06)
    node[midway, right=5pt, font=\scriptsize] {cancel};
 
\end{tikzpicture}
 
\smallskip
$\varepsilon_i(\lambda)=1,\;\varphi_i(\lambda)=3$: reduced $i$-signature\ ${\color{red}+\,+\,+\,-}$
\end{minipage}
\hfill
\begin{minipage}[t]{0.44\textwidth}
\centering
\begin{tikzpicture}[>=Stealth, scale=1.0]
 
  \def\rs{0.72}
  \def\rw{1.9}
  \def\br{0.14}
  \def\dl{0.11}
  \def\rx{\rw+0.58}
 
  \draw[thick] (0,   0.1) -- (0,   -8.3*\rs);
  \draw[thick] (\rw, 0.1) -- (\rw, -8.3*\rs);
 
  \node[font=\small] at (0,    1.05*\rs) {$i{-}1$};
  \node[font=\small] at (\rw,  1.05*\rs) {$i$};
 
  \node at (0,    0.55*\rs)  {$\vdots$};
  \node at (\rw,  0.55*\rs)  {$\vdots$};
  \node at (0,   -8.75*\rs)  {$\vdots$};
  \node at (\rw, -8.75*\rs)  {$\vdots$};
 
  \node[font=\scriptsize\itshape] at (1.2*\rx, 0.55*\rs) {$i$-signature};

  \fill (0,   0) circle (\br);
  \fill (\rw, 0) circle (\br);

  \fill (0, -\rs) circle (\br);
  \draw (\rw-\dl, -\rs) -- (\rw+\dl, -\rs);
  \node[font=\small, red] at (\rx, -\rs) {$+$};

  \draw (-\dl, -2*\rs) -- (\dl, -2*\rs);
  \fill (\rw, -2*\rs) circle (\br);
  \node[font=\small] at (\rx, -2*\rs) {$-$};

  \fill (0, -3*\rs) circle (\br);
  \draw (\rw-\dl, -3*\rs) -- (\rw+\dl, -3*\rs);
  \node[font=\small] at (\rx, -3*\rs) {$+$};
 
  \draw[decorate, decoration={brace, amplitude=4pt}]
    (\rw+0.95, -2*\rs+0.06) -- (\rw+0.95, -3*\rs-0.06)
    node[midway, right=5pt, font=\scriptsize] {cancel};

  \draw (-\dl, -4*\rs) -- (\dl, -4*\rs);
  \fill[white] (\rw, -4*\rs) circle (\br);
  \draw        (\rw, -4*\rs) circle (\br);
  \draw[<-, thick, blue!75!black]
    (\br+0.04, -4*\rs) -- (\rw-\br-0.04, -4*\rs);
  \node[font=\small, red] at (\rx, -4*\rs) {$-$};

  \draw (-\dl, -5*\rs) -- (\dl, -5*\rs);
  \fill[white] (\rw, -5*\rs) circle (\br);
  \draw        (\rw, -5*\rs) circle (\br);
  \draw[<-, thick, blue!75!black]
    (\br+0.04, -5*\rs) -- (\rw-\br-0.04, -5*\rs);
  \node[font=\small, red] at (\rx, -5*\rs) {$-$};

  \draw (-\dl, -6*\rs) -- (\dl, -6*\rs);
  \fill (\rw, -6*\rs) circle (\br);
  \node[font=\small, red] at (\rx, -6*\rs) {$-$};
  
  \draw (-\dl, -7*\rs) -- (\dl, -7*\rs);
  \fill (\rw, -7*\rs) circle (\br);
  \node[font=\small] at (\rx, -7*\rs) {$-$};
 
  \fill (0, -8*\rs) circle (\br);
  \draw (\rw-\dl, -8*\rs) -- (\rw+\dl, -8*\rs);
  \node[font=\small] at (\rx, -8*\rs) {$+$};
 
  \draw[decorate, decoration={brace, amplitude=4pt}]
    (\rw+0.95, -7*\rs+0.06) -- (\rw+0.95, -8*\rs-0.06)
    node[midway, right=5pt, font=\scriptsize] {cancel};
 
\end{tikzpicture}
 
\smallskip
$\varepsilon_i(\lambda)=3,\;\varphi_i(\lambda)=1$: reduced $i$-signature\ ${\color{red}+\,{-}\,{-}\,{-}}$
\end{minipage}
 
\bigskip
\begin{minipage}{\textwidth}
\centering
\begin{tikzpicture}[>=Stealth]
  \def\br{0.13}
  \fill[white](0,0) circle(\br); \draw(0,0) circle(\br);
  \draw[->, blue!75!black](0.18,0)--(0.55,0);
  \node[right, font=\scriptsize] at (0.6,0) {bead moves under $\sigma_i$};
  \fill(5.0,0) circle(\br);
  \node[right, font=\scriptsize] at (5.18,0) {bead (stays)};
  \draw(-\br,-0.45)--(\br,-0.45);
  \node[right, font=\scriptsize] at (0.18,-0.45) {vacant position};
  \node[red, font=\scriptsize] at (5.0,-0.45) {\textbullet};
  \node[right, font=\scriptsize, red] at (5.18,-0.45) {reduced $i$-signature};
\end{tikzpicture}
\end{minipage}
 
\caption{Abacus diagrams illustrating $\sigma_i$ for $\varepsilon_i(\lambda)<\varphi_i(\lambda)$ (left) and $\varepsilon_i(\lambda)>\varphi_i(\lambda)$ (right). }
\label{figure:samplereflectionabacus}
\end{figure}
 
In either case, using the correspondence from Lemma~\ref{lem:hook}, `new' $e$-divisible hooks are created in two different ways. Beads that moved may have empty spaces at lower-numbered positions on their new runner. They also leave vacant positions that may be paired with beads at higher numbered positions on their original runner. These new hooks will be the focus of the proof that the crystal reflections preserve the $d$-balanced property: we assume all the $e$-divisible hooks in $\lambda$ have arm lengths divisible by $d$ and prove that this is the case as well for $\sigma_i(\lambda)$, by showing the newly created hooks have this property and that the existing $e$-divisible hooks keep the property.
 
We often need to understand how $\emp_{\lambda}$ changes when we change its arguments by $1$. As this is simple, we only provide a detailed proof of this change in one particular case.
 
\begin{lemma}\label{lem:shift}
Let $\lambda$ be a partition and let $i\in\{ 0,1,\dots, e-1 \}$. Suppose that position
$k' e + i$ is a gap and position $k e + (i-1)$ is a bead in $\lambda$, where
$k' < k$. Then
\[
  \vac{k' e + i}{k e + i}{\lambda}
  = \vac{k' e + (i-1)}{k e + (i-1)}{\lambda} - 1.
\]
\end{lemma}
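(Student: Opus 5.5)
The plan is to compare the two sets of positions strictly between the endpoints directly. Write $u=k'e+i$ and $v=ke+i$. The set of positions strictly between $u-1$ and $v-1$ is obtained from the set strictly between $u$ and $v$ by adding position $u$ and removing position $v-1$. That is,
\[
(u-1,v-1)\cap\mathbb{Z} = \bigl((u,v)\cap\mathbb{Z}\bigr)\cup\{u\}\setminus\{v-1\}.
\]
Here we need $u<v-1$, so that $u$ and $v-1$ are distinct. This holds because $k'<k$ gives $v-u\ge e\ge 2$.

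Counting vacancies then gives
\[
\vac{u-1}{v-1}{\lambda}=\vac{u}{v}{\lambda}+[u\text{ vacant}]-[v-1\text{ vacant}].
\]
By hypothesis $u=k'e+i$ is a gap and $v-1=ke+(i-1)$ is a bead. So the correction is $+1-0$, which rearranges to the claimed identity.

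The only point needing care is the case $i=0$, where runner $i-1$ means runner $e-1$ at the previous level. With the paper's convention, the level-$k$ position on runner $i-1$ is $ke+(i-1)=ke-1$ literally, so the integer arithmetic above applies verbatim and no separate case is needed. I expect no real obstacle beyond stating this convention explicitly.
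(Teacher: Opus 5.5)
Your argument is correct and is essentially the paper's proof: both compare the intervals $(k'e+i,\,ke+i)$ and $(k'e+i-1,\,ke+i-1)$ and note that they differ only in the vacant position $k'e+i$ and the bead $ke+(i-1)$. Your explicit checks that these two positions are distinct (since $e\ge 2$) and that the case $i=0$ needs no separate treatment are small details the paper leaves implicit.
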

 
\begin{proof}
The two intervals in which we count vacant positions are $(k' e+i,\,k e+i) = \{k' e+i+1,\ldots,k e+i-1\}$
and $(k' e+(i-1),\,k e+(i-1)) = \{k' e+i,\ldots,k e+i-2\}$.
They contain the same vacant positions except for position $k' e+i$, which is
vacant by hypothesis and is included in the second interval but not the first.
(The position $k e+(i-1)$, which is in the first interval but not the second, is
a bead by hypothesis and so does not contribute to either count.)
\end{proof}

We are now ready to prove that the crystal reflections preserve the $d$-balanced property. The argument is entirely abacus-theoretic; it boils down to using Lemma \ref{lem:hook} and keeping track of the vacant positions. Figure~\ref{figure:samplereflectionabacus} can be used to display the various cases in the proof. We refer to levels where runners $i-1$ and $i$ either both have beads or are both vacant as \textit{neutral}. If the beads at level $k$ correspond to a $+$ or $-$ in the $i$-signature that is cancelled, we say it is \textit{matched}. If it corresponds to a $+$ or $-$ that remains in the reduced $i$-signature, we say it is \textit{unmatched}.

\begin{theorem}\label{thm:main}
Let $\lambda$ be a $d$-balanced partition and let $i\in \{ 0,1,\dots, e-1 \}$. Then
$\sigma_i(\lambda)$ is also $d$-balanced. 
\end{theorem}
 
\begin{proof}
Throughout the proof, all hooks are assumed to be $e$-divisible. Let $b=\varepsilon_i(\lambda)$ and $a=\varphi_i(\lambda)$, so the reduced $i$-signature of $\lambda$ has the form $+^a -^b$ and the reflection
$\sigma_i$ moves $|a-b|$ beads between runners $i-1$ and $i$ simultaneously. We can assume that $a\neq b$.
 
For $j\neq i-1, i$, the hooks on runner $j$ are unaffected by $\sigma_i$ -- we have $\vac{k'e+j}{ke+j}{\sigma_i(\lambda)} = \vac{k'e+j}{ke+j}{\lambda}$ for $k'<k$, and the pair of $ke+j$ and $k'e+j$ corresponds to a hook in either both or none, $\sigma_i(\lambda)$ and $\lambda$. Similarly, for $j=i-1,i$, we have $\vac{k'e+j}{ke+j}{\sigma_i(\lambda)} = \vac{k'e+j}{ke+j}{\lambda}$ whenever $k'<k$ and the pair $ke+j$ and $k'e+j$ correspond to a hook in both $\lambda$ and $\sigma_i(\lambda)$; thus we need only to check the $d$-balanced divisibility condition for the new hooks, that is, pairs of levels on runner $i-1$ or $i$ corresponding to a hook in $\sigma_i(\lambda)$ but not in $\lambda$.

The argument splits into two cases, depending on whether
$\sigma_i$ is adding nodes ($a>b$) or removing nodes ($b>a$), and then into subcases
depending on which types of new hooks are created (see the paragraph before Lemma~\ref{lem:shift}). Cases 1 and 2 are mirror images of each other; within each, sub-cases (a) and (b) handle whether the 'other end' of the new hook lies at a neutral level or not.
 
\bigskip
\noindent\textbf{Case 1 $a > b$: beads move right, from runner $i-1$ to runner $i$.}
 
Here $\sigma_i(\lambda) = \tilde{f}_i^{a-b}(\lambda)$, moving $a-b$ beads of $\lambda$ from runner $i-1$ to runner
$i$ at levels $S$, the $a-b$ greatest unmatched $+$'s in the reduced $i$-signature.
At each $k \in S$: runner $i-1$ has a bead and runner $i$ is
vacant in $\lambda$; after the moves, runner $i-1$ is vacant and runner $i$ has a bead.
The $b$ unmatched $-$'s appear at levels greater than all levels in $S$.
 
A new hook created on runner $i$ from level $k$ to level $k_G < k$ (runner $i$ has a gap at
$k_G$ in $\sigma_i(\lambda)$) has arm length:
\begin{equation}\label{eq:A1}
  A_1 = \vac{k_Ge+i}{ke+i}{\sigma_i(\lambda)} = \vac{k_Ge+i}{ke+i}{\lambda} + 1,
\end{equation}
since for any $k' \in S\cap(k_G,k)$ the empty space on runner $i$ is replaced by
one on runner $i-1$, so the only net change is the gap created at $ke+(i-1)$
(contributing $+1$).
 
A new hook on runner $i-1$ from level $k_B>k$ to level $k$ (runner $i-1$ has a bead
at $k_B$ in $\sigma_i(\lambda)$) has arm length:
\begin{equation}\label{eq:A2}
  A_2 = \vac{ke+(i-1)}{k_Be+(i-1)}{\sigma_i(\lambda)} = \vac{ke+i}{k_Be+(i-1)}{\lambda},
\end{equation}
since the only net change in the interval $(ke+(i-1), k_Be+(i-1))$ is the bead
created at $ke+i$ (contributing $-1$), which lies just outside the left endpoint of
the interval $(ke+i, k_Be+(i-1))$, and all other intermediate moved beads at
levels $k' \in S \cap (k, k_B)$ contribute a net of zero by the same argument.
We must show both $A_1$ and $A_2$ are divisible by $d$.
 
\medskip\noindent\textbf{Arm length $A_1$ (new hook on runner $i$).}
From \eqref{eq:A1}, it suffices to show $\vac{k_Ge+i}{ke+i}{\lambda} \equiv -1 \pmod{d}$.
Since runner $i$ is vacant at $k_G$ in $\lambda$, level $k_G$ is either neutral (runners $i-1$ and $i$ are both vacant) or contributes a  $+$ (runner $i-1$ occupied, runner $i$ vacant). Since we are assuming a new hook is created on runner $i$, a bead at level $k_G$ on runner $i-1$ (if it exists) does not move.
 
\medskip\noindent\textit{Sub-case 1a: runner $i-1$ vacant at $k_G$ (level $k_G$ is
neutral).}
The hook on runner $i-1$ from level $k$ to level $k_G$ exists in $\lambda$ (runner
$i-1$ is occupied at $k$ since $k \in S$, and is vacant at $k_G$), with arm length
$\vac{k_Ge+(i-1)}{ke+(i-1)}{\lambda} \equiv 0 \pmod{d}$.
Since $k_Ge+i$ is a gap and $ke+(i-1)$ is a bead in $\lambda$, Lemma~\ref{lem:shift} gives
\[
  \vac{k_Ge+i}{ke+i}{\lambda}
  = \vac{k_Ge+(i-1)}{ke+(i-1)}{\lambda} - 1
  \equiv -1 \pmod{d},
\]
so, by \eqref{eq:A1}, $A_1 \equiv 0 \pmod{d}$. 
 
 \medskip\noindent\textit{Sub-case 1b: runner $i-1$ is occupied at $k_G$
(level $k_G$ contributes a $+$).}
The new hook on runner $i$ from $k$ to $k_G$ exists in $\sigma_i(\lambda)$
because runner $i$ remains vacant at $k_G$ after the moves. Hence $k_G\notin S$.
Thus the $+$ at level $k_G$ is either matched, or it is an unmatched $+$ not
among the moved levels $S$. We treat these two possibilities separately.

First, suppose that the $+$ at level $k_G$ is matched. Then it is cancelled with
a $-$ at some level $\ell<k_G$, since the $i$-signature is read in increasing
level and cancellations are of the form $-+$. At level $\ell$, runner $i-1$ is
vacant and runner $i$ has a bead. Thus, on runner $i-1$ in $\lambda$, we have
hooks from level $k$ to level $\ell$ and from level $k_G$ to level $\ell$.
Their arm lengths are both divisible by $d$:
\[
  \vac{\ell e+(i-1)}{k e+(i-1)}{\lambda} \equiv 0 \pmod d, \quad
  \vac{\ell e+(i-1)}{k_G e+(i-1)}{\lambda} \equiv 0 \pmod d.
\]
Since $\ell<k_G<k$ and $k_Ge+(i-1)$ is a bead, subtracting the latter congruence from the former one, we obtain
\[
  \vac{k_Ge+(i-1)}{k e+(i-1)}{\lambda}
  \equiv 0 \pmod d.
\]
Since $k_Ge+i$ is a gap and $ke+(i-1)$ is a bead in $\lambda$,
Lemma~\ref{lem:shift} gives
\[
  \vac{k_Ge+i}{ke+i}{\lambda}
  =
  \vac{k_Ge+(i-1)}{ke+(i-1)}{\lambda}-1
  \equiv -1 \pmod d.
\]
Therefore, by \eqref{eq:A1}, $A_1 \equiv 0 \pmod{d}$.

Now suppose that the $+$ at level $k_G$ is unmatched. This case
can occur only if $b\geq 1$, since if $b=0$, then all unmatched $+$'s are moved. Since the $i$-reduced signature has the form $+^a-^b$, all
$b$ unmatched $-$'s are strictly greater than all levels in $S$. Let $m$ be one of these $b$ levels, so $m> k$ and runner
$i$ has a bead at level $m$. Since level $k_G$ contributes a $+$, runner $i$ is vacant at level
$k_G$ in $\lambda$; also runner $i$ is vacant at level $k$ because $k\in S$.
Therefore, on runner $i$ in $\lambda$, the bead at level $m$ gives hooks to
both levels $k$ and $k_G$. Their arm lengths are divisible by $d$:
\[
  \vac{ke+i}{me+i}{\lambda} \equiv 0 \pmod d,
  \quad
  \vac{k_Ge+i}{me+i}{\lambda} \equiv 0 \pmod d.
\]
Since $k_G<k<m$, subtracting the former congruence from the latter one, we obtain 
\[
  \vac{k_Ge+i}{ke+i+1}{\lambda} \equiv 0 \pmod d.
\]
Since $ke + i$ is a gap, we have
\[
  \vac{k_Ge+i}{ke+i}{\lambda}
  =
  \vac{k_Ge+i}{ke+i+1}{\lambda}
  -1
  \equiv -1 \pmod d.
\]
Hence, again by \eqref{eq:A1}, $A_1 \equiv 0 \pmod{d}$.

\medskip\noindent\textbf{Arm length $A_2$ (new hook on runner $i-1$).}
Recall that $A_2$ is the arm length of a hook on runner $i-1$ from level $k_B>k$ (which is a bead) to level $k$ (which is vacant after the moves). Note that $k_B \notin S$, since the beads at levels in $S$ on runner $i-1$ have all moved to
runner $i$, and so runner $i-1$ is vacant at those levels after the moves. By
\eqref{eq:A2} the arm length of such a hook in $\sigma_i(\lambda)$ equals
$\vac{ke+i}{k_Be+(i-1)}{\lambda}$, and we must show this is $\equiv 0 \pmod{d}$.
Since $k_B \notin S$ and runner $i-1$ has a bead at $k_B$, level $k_B$ either
contributes nothing to the $i$-signature (runner $i$ occupied) or is a
matched $+$ (runner $i$ vacant).
 
\medskip\noindent\textit{Sub-case 2a: runner $i$ occupied at $k_B$ (neutral).}
The hook on runner $i$ from $k_B$ to $k$ exists in $\lambda$ and has arm length
$\vac{ke+i}{k_Be+i}{\lambda} \equiv 0 \pmod{d}$.
Since $k_Be+(i-1)$ is a bead,
\[
A_2=\vac{ke+i}{k_Be+(i-1)}{\lambda}
= \vac{ke+i}{k_Be+i}{\lambda} \equiv 0 \pmod{d}.
\]
 
\medskip\noindent\textit{Sub-case 2b: runner $i$ vacant at $k_B$ (matched $+$).}
The $+$ at level $k_B$ is cancelled by a $-$ at some level $\ell$ with
$k < \ell < k_B$: runner $i$ is occupied and runner $i{-}1$ is vacant at $\ell$.
We know $\ell > k$ since $\ell$ was chosen to cancel with $k_B$, and were $k_B<k$
this could not happen since $k$ is unmatched.
Two hooks exist in $\lambda$: one on runner $i$ from level $\ell$ to level
$k \in S$ and the other on runner $i-1$ from level $k_B$ to level $\ell$. Their arm lengths are:
\[
\vac{ke+i}{\ell e+i}{\lambda} \equiv 0 \pmod d, \quad
\vac{\ell e+(i-1)}{k_Be+(i-1)}{\lambda} \equiv 0 \pmod d.
\]
Since $ke+i < \ell e+i < k_Be+(i{-}1)$
(the last inequality holds because $\ell<k_B$ and $e\ge 2$), adding these congruences gives
\[
A_2=\vac{ke+i}{k_Be+(i-1)}{\lambda} \equiv 0 \pmod{d}.
\]
 
\bigskip\bigskip
\noindent\textbf{Case 2 $a < b$: beads move left, from runner $i$ to runner $i-1$.}

A new hook on runner $i-1$ from level $k$ to level $k_G < k$ (runner $i-1$ has a gap
at $k_G$ in $\sigma_i(\lambda)$) has arm length:
\begin{equation}\label{eq:A1prime}
  A_1' = \vac{k_Ge+(i-1)}{ke+(i-1)}{\sigma_i(\lambda)} = \vac{k_Ge+(i-1)}{ke+(i-1)}{\lambda},
\end{equation}
since for any $k' \in S\cap(k_G,k)$ the empty space on runner $i-1$ is replaced by
one on runner $i$, so there is no net change, and the bead arriving at $ke+(i-1)$
lies at the right endpoint (excluded).
 
A new hook on runner $i$ from level $k_B > k$ to level $k$ (runner $i$ has a bead at
$k_B$ in $\sigma_i(\lambda)$) has arm length:
\begin{equation}\label{eq:A2prime}
  A_2' = \vac{ke+i}{k_Be+i}{\sigma_i(\lambda)} = \vac{ke+i}{k_Be+i}{\lambda},
\end{equation}
since the bead departing from $ke+i$ lies at the left endpoint (excluded), and
$ke+(i-1)$ (which gains a bead) lies strictly to the left of $ke+i$ and so
outside the interval, and all other intermediate moved beads contribute a net of zero
by the same argument.
We must show that both $A_1'$ and $A_2'$ are divisible by $d$.
 
\medskip\noindent\textbf{Arm length $A_1'$ (new hook on runner $i-1$).}
From \eqref{eq:A1prime}, it suffices to show
$\vac{k_Ge+(i-1)}{ke+(i-1)}{\lambda} \equiv 0 \pmod{d}$.
Since runner $i-1$ is vacant at $k_G$ after the move, we have $k_G \notin S$. Level $k_G$ therefore
either contributes nothing to the $i$-signature (runner $i$ also vacant) or
contributes a matched $-$ (runner $i$ occupied at $k_G$).
 
\medskip\noindent\textit{Sub-case 1$'$a: runner $i$ vacant at $k_G$ (level $k_G$ is neutral).}
The hook on runner $i$ from $k$ to $k_G$ exists in $\lambda$ and has arm length
$\vac{k_Ge+i}{ke+i}{\lambda} \equiv 0 \pmod{d}$.
Since $k_Ge+i$ is a gap and $ke+(i-1)$ is a gap, we get:
\[
  A_1'=\vac{k_Ge+(i-1)}{ke+(i-1)}{\lambda}
  = \vac{k_Ge+i}{ke+i}{\lambda}\equiv 0 \pmod{d}.
\]
 
\medskip\noindent\textit{Sub-case 1$'$b: runner $i$ occupied at $k_G$ (level $k_G$ is a matched $-$).}
The matched $-$ at $k_G$ is paired with a $+$ at some $\ell \in (k_G, k)$ (runner
$i-1$ is a bead, and runner $i$ is vacant at $\ell$). Two hooks exist in $\lambda$: one on runner $i-1$ from $\ell$ to $k_G$ and the other on runner $i$ from $k$ to $\ell$; thus
\[
\vac{k_Ge+(i-1)}{\ell e+(i-1)}{\lambda} \equiv 0 \pmod d, \quad
\vac{\ell e+i}{ke+i}{\lambda} \equiv 0 \pmod d.
\]
Since $\ell e+(i-1)$ is a bead and $\ell e+i$ is a gap in $\lambda$, adding these congruences, we obtain
\[
  \vac{k_Ge+(i-1)}{ke+i}{\lambda} \equiv 1 \pmod d.
\]
As $ke+(i-1)$ is a gap in $\lambda$, we immediately deduce that
\[
A_1' = \vac{k_Ge+(i-1)}{ke+(i-1)}{\lambda} = \vac{k_Ge+(i-1)}{ke+i}{\lambda} - 1 \equiv 0 \pmod d.
\]

\medskip\noindent\textbf{Arm length $A_2'$ (new hook on runner $i$).}
Recall that $A_2'$ is the arm length of a hook on runner $i$ from level $k_B>k$ (which is a bead) to level $k$ (which is vacant after the moves). Note that $k_B \notin S$, since the beads at levels in $S$ have all moved to runner
$i-1$ and so runner $i$ is vacant at those levels after the moves. By \eqref{eq:A2prime}
the arm length of such a hook in $\sigma_i(\lambda)$ equals
$\vac{ke+i}{k_Be+i}{\lambda}$, and we must show this is $\equiv 0 \pmod{d}$.
Since $k_B \notin S$ and runner $i$ has a bead at $k_B$, level $k_B$ either
contributes nothing to the $i$-signature (runner $i-1$ occupiedl) or
contributes a $-$ (runner $i-1$ vacant).
 
\medskip\noindent\textit{Sub-case 2$'$a: runner $i-1$ occupied at $k_B$ (neutral).}
The arm length of the hook of $\lambda$ on runner $i-1$ from $k_B$ to $k$ (runner $i-1$ has a gap at $k$ since $k \in S$) is $\vac{ke+(i-1)}{k_Be+(i-1)}{\lambda}\equiv 0 \pmod d$.
Since $ke+i$ and $k_Be+(i-1)$ are beads of $\lambda$, we obtain
\[
  A_2'=\vac{ke+i}{k_Be+i}{\lambda}
  = \vac{ke+(i-1)}{k_Be+(i-1)}{\lambda}.
\]
 
\medskip\noindent\textit{Sub-case 2$'$b: runner $i-1$ vacant at $k_B$ (level $k_B$
contributes $-$).}
We further split according to whether the $-$ at level $k_B$ is matched or unmatched.

First suppose that the $-$ at level $k_B$ is matched. Then it is cancelled with a $+$ at some $\ell > k_B$ (runner
$i-1$ has a bead, and runner $i$ is vacant at $\ell$). On runner $i-1$ in $\lambda$ we have hooks that go from level $\ell$ to level $k$ (since $k\in S$) and from level $\ell$ to level $k_B$. Their respective arm lengths are:
\[
  \vac{ke+(i-1)}{\ell e+(i-1)}{\lambda} \equiv 0 \pmod{d}, \quad
  \vac{k_Be+(i-1)}{\ell e+(i-1)}{\lambda} \equiv 0 \pmod{d}.
\]
Subtracting the latter from the former yields:
\[
\vac{ke+(i-1)}{k_Be+i}{\lambda} \equiv 0 \pmod{d}.
\]
Since $ke+i$ is a bead in $\lambda$,
\[
  A_2'=\vac{ke+i}{k_Be+i}{\lambda} =  \vac{ke+(i-1)}{k_Be+i}{\lambda} \equiv 0 \pmod{d}.
\]

Now suppose that the $-$ at level $k_B$ is unmatched. This case can only occur if $a\geq 1$, since if $a=0$, then all unmatched $-$'s are moved. Since the $i$-reduced signature has the form $+^a -^b$, all $a$ unmatched $+$'s are at levels strictly smaller than all levels in $S$. Let $m$ be one of these $a$ levels, so $m<k$ and runner $i$ is vacant at $m$. Two hooks with the following corresponding arm lengths exist on runner $i$:
\[
  \vac{me+i}{ke+i}{\lambda} \equiv 0 \pmod{d}, \quad
  \vac{me+i}{k_Be+i}{\lambda} \equiv 0 \pmod{d}.
\]
Since $ke+i$ is a bead, subtracting the former from the latter yields:
\[
  A_2'=\vac{ke+i}{k_Be+i}{\lambda} \equiv 0 \pmod{d}.
\]

\end{proof}

We immediately deduce the $e$-core independence of the number of $d$-balanced $e$-regular partitions in a block of $e$-weight $w\geq 0$. We determine this number in the next section.

\begin{corollary}
\label{cor:samenumbereveryblockweightw}
For integers $d,e>1$ and $w \geq 0$, the number of $d$-balanced $e$-regular partitions is the same in every block of $e$-weight $w$; in particular, it depends only on $d$, $e$ and $w$.
\end{corollary}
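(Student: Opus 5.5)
The plan is to combine Theorem~\ref{thm:main} with Propositions~\ref{prop:bijection} and~\ref{prop:crystal-bijection}. Fix two blocks $B$ and $B'$ of $e$-weight $w$, with $e$-cores $\gamma$ and $\gamma'$. By Proposition~\ref{prop:crystal-bijection} there is a sequence $i_1,\dots,i_m$ of residues such that the composition $\sigma_{i_m}\circ\cdots\circ\sigma_{i_1}$ restricts to a bijection from the $e$-regular partitions of $B$ onto those of $B'$. Indeed, by Proposition~\ref{prop:bijection}, each $\sigma_{i_j}$ maps the $e$-regular partitions of the block with core $\gamma_{j-1}$ and weight $w$ bijectively onto those of the block with core $\gamma_j=\sigma_{i_j}(\gamma_{j-1})$ and the same weight. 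Here $\gamma_0=\gamma$ and $\gamma_m=\gamma'$; this chain exists because swapping adjacent runners (including the cyclic swap for $i=0$) generates enough moves to connect any two $e$-cores.

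The key step is to check that each $\sigma_{i_j}$ restricts to a bijection between the $d$-balanced members of these two sets. By Theorem~\ref{thm:main}, $\sigma_{i_j}$ sends $d$-balanced partitions to $d$-balanced partitions. It is involutive by Proposition~\ref{prop:bijection}, so its inverse is $\sigma_{i_j}$ itself, viewed as a map from the block with core $\gamma_j$ back to the block with core $\gamma_{j-1}$. Applying Theorem~\ref{thm:main} to this inverse shows that it also preserves $d$-balancedness. Hence $\sigma_{i_j}$ restricts to a bijection on $d$-balanced $e$-regular partitions. Composing these restrictions gives a bijection between the $d$-balanced $e$-regular partitions of $B$ and those of $B'$, so the two counts agree.

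There is no real obstacle, since the combinatorial work is already done in Theorem~\ref{thm:main}. The only point to be careful about is that $\sigma_i$ is applied only to $e$-regular partitions, which matches both the hypothesis of Definition~\ref{def:reflections} and the setting of Proposition~\ref{prop:bijection}. We also need $\sigma_i$ to preserve $e$-weight so that every intermediate block has weight $w$, and Proposition~\ref{prop:bijection} provides this.
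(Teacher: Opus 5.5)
Your proposal is correct and follows the paper's argument, which simply combines Theorem~\ref{thm:main} with Proposition~\ref{prop:crystal-bijection}. Using the involutivity from Proposition~\ref{prop:bijection} to show that each reflection restricts to a bijection on the $d$-balanced partitions just makes explicit a step the paper leaves implicit.
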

 
\begin{proof}
This is a combination of Theorem~\ref{thm:main} and Proposition~\ref{prop:crystal-bijection}.
\end{proof}

\section{RoCK Blocks}\label{se:RoCK}
Having shown that the number of $d$-balanced $e$-regular partitions is the same for every block
of $e$-weight $w$, we strategically choose to count them in blocks known as RoCK blocks
(also called Rouquier blocks), where the abacus structure is particularly well-suited to determining arm-lengths. We prove:
 
\begin{theorem}
  \label{thm:maincountRouquier}
  Let $d,e>1$ and $w\geq 0$ be integers. The number of $d$-balanced $e$-regular partitions in a block of $e$-weight $w$ is
  \[
    \binom{w + \left\lfloor\frac{e-1}{d}\right\rfloor - 1}{w}.
  \]
\end{theorem}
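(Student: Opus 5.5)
By Corollary~\ref{cor:samenumbereveryblockweightw}, it suffices to count $d$-balanced $e$-regular partitions in a single block of $e$-weight $w$. We choose a RoCK block: its $e$-core has an abacus in which each runner $j$ has at least $w-1$ more beads than runner $j-1$, for $j=1,\dots,e-1$. In such a block, every partition of weight $w$ is obtained from the core by sliding beads down on the individual runners. The beads on different runners never interact, and the partition is recorded by an $e$-quotient $(\lambda^{(0)},\dots,\lambda^{(e-1)})$ with $\sum|\lambda^{(j)}|=w$.

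The first step is to translate the $e$-regularity and $d$-balanced conditions into conditions on the quotient. For $e$-regularity, the standard fact in RoCK blocks is that $\lambda$ is $e$-regular if and only if $\lambda^{(0)}=\varnothing$. One checks this via Remark~\ref{re:beta-set}(4): a bead moved down on runner $0$ sits beside full rows of beads on runners $1,\dots,e-1$, which creates $e$ consecutive beads above a gap.

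For the $d$-balanced condition, we use Lemma~\ref{lem:hook}. An $e$-divisible hook is a bead/gap pair on runner $j$, from level $k$ down to level $k'$. Because of the large gaps between runners, at the levels where runner $j$ is "active" (levels near its bead frontier), runners $0,\dots,j-1$ are entirely vacant and runners $j+1,\dots,e-1$ are entirely full. So $\vac{k'e+j}{ke+j}{\lambda}$ splits into three parts:
\begin{itemize}
\item the gaps of runner $j$ strictly between levels $k'$ and $k$;
\item $j$ gaps from the vacant runners at level $k'$ (the positions $k'e+j+1,\dots$ only reach the later runners, which are full), suitably counted;
\item $j$ vacant positions per intermediate level, plus the leftover positions at level $k$.
\end{itemize}
Doing this carefully, the arm length should come out as $j(k-k')+g$, where $g$ is the number of gaps on runner $j$ strictly between the two levels. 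This equals the arm length of the corresponding hook in $\lambda^{(j)}$ plus $j$ times its hook length. The same computation must also cover the boundary case where the gap lies in the core's region; we handle it by treating every hook of $\lambda^{(j)}$ uniformly in the abacus of that runner.

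Hence $\lambda$ is $d$-balanced exactly when, for each $j$, every hook $h$ of $\lambda^{(j)}$ satisfies
\[
a(h)+j\,|h|\equiv 0 \pmod d .
\]
The second step is to classify the partitions satisfying this condition.
\begin{itemize}
\item If $j\equiv 0\pmod d$, the condition says all arm lengths are divisible by $d$. Applying it to the last node of the first row gives arm length $0$, which is fine. The next node in that row has arm length $1$, so $\lambda^{(j)}$ must be a single column $(1^m)$ for some $m$.
\item If $j\equiv -1\pmod d$, the condition becomes $\ell(h)+1\equiv 0$, i.e. every leg length is $\equiv -1 \pmod d$. The bottom node of a column has leg length $0$, which violates this, so $\lambda^{(j)}=\varnothing$ (assuming $d>1$).
\item For other residues of $j$, the nodes at the end of a row or column give similar contradictions, forcing $\lambda^{(j)}=\varnothing$.
\end{itemize}
I expect this classification, together with getting the exact arm-length formula (the offset $j|h|$ versus something like $j(|h|-1)$ or $(e-1-j)$), to be the main obstacle. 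The offset determines which runners contribute. My first attempt will be to verify the formula on small examples, such as Figure~\ref{fig:d-balanced} transported to a RoCK core, before fixing conventions.

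The third step is the count. Assuming the free runners are exactly those $j\in\{1,\dots,e-1\}$ with $j\equiv 0\pmod d$, each free runner contributes an arbitrary column $(1^{m_j})$, and all other runners are empty. There are $\lfloor (e-1)/d\rfloor$ such runners, and the quotients are determined by the tuples $(m_j)$ of nonnegative integers summing to $w$. The number of such tuples is
\[
\binom{w+\lfloor (e-1)/d\rfloor-1}{w},
\]
as required. The exclusion of runner $0$ (by $e$-regularity) is precisely what accounts for the $-1$, consistent with Corollary~\ref{co:all d-balanced}, where runner $0$ also contributes.
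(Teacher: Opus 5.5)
Your proposal is correct and is essentially the paper's proof: you reduce to a RoCK block via Corollary~\ref{cor:samenumbereveryblockweightw} and use the arm-length formula $j(k-k')+g$, which is the paper's Lemma~\ref{lemma:armlengthinrockblocks}. In your bullet count, the positions at level $k'$ contribute nothing and the remaining $j$ vacancies come from level $k$. You then conclude that only runners $j\neq 0$ with $d\mid j$ may move, each with quotient $(1^m)$, and count by stars and bars, exactly as the paper does.

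Your quotient condition $a(h)+j|h|\equiv 0 \pmod d$ repackages the paper's Lemmas~\ref{lemma:whichrunners} and~\ref{le:notwo}. Applying it to a removable node of $\lambda^{(j)}$ (arm $0$, hook length $1$) gives $d\mid j$ at once, which settles all your nonzero-residue cases, including the vague third one, uniformly.
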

 
To prove this theorem we focus on the RoCK block of $e$-weight $w$. We represent its
$e$-core on an abacus where runner $i \in \{0, 1, \ldots, e-1\}$ has $w(i+1)$ beads,
all slid up to the top. For example, Figure~\ref{fig:rock-core} shows the case $e=5$,
$w=3$.
 
\begin{figure}[ht]
\centering
\abacus(lmmmr,%
bbbbb,bbbbb,bbbbb,%
nbbbb,nbbbb,nbbbb,%
nnbbb,nnbbb,nnbbb,%
nnnbb,nnnbb,nnnbb,%
nnnnb,nnnnb,nnnnb)
\caption{The RoCK block $5$-core for $w=3$, with runners $0,1,2,3,4$ having
$3,6,9,12,15$ beads respectively.}
\label{fig:rock-core}
\end{figure}

This abacus representation has a multiple of $e$ beads (and thus corresponds to the canonical $\beta$-set) whenever $e$ is odd or $w$ is even. In the remaining cases, one can use the $\beta$-set with respect to $r=e/2$. Consequently, when referring to an abacus display of partition $\lambda$, we use the $\beta$-set $B_{e/2}(\lambda)$ when $e$ is even and $w$ is odd, and the canonical $\beta$-set $B_0(\lambda)$, otherwise.
We analogously update the definition of $\vac{u}{v}{\lambda}$ from the previous section. We can still use Lemma~\ref{lem:hook}, as explained in Remark~\ref{re:Other r}.
 
What makes this block particularly tractable is that any partition in
this block has the property that every bead on runner $j$ at level $k$ has beads on
every runner to its right at every level $k' \leq k$. Put informally, no bead
can move far enough down the abacus to pass the wall of beads on its right. This makes determining arm lengths much easier:
 
\begin{lemma}
\label{lemma:armlengthinrockblocks}
Suppose that $\lambda$ is a partition in a RoCK block with an $ae$-hook of $\lambda$ on the runner $i\in \{ 0,1,\dots, e-1\}$ going from level $k$ to level $k-a$. Suppose there are $t$ vacant
positions of $\lambda$ on runner $i$ strictly between levels $k-a$ and $k$. Then the arm length
of this hook is $ai+t$.
\end{lemma}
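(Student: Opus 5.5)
By Lemma~\ref{lem:hook} (via Remark~\ref{re:Other r}), the arm length of the hook equals $\vac{(k-a)e+i}{ke+i}{\lambda}$, the number of vacant positions strictly between positions $(k-a)e+i$ and $ke+i$. I would compute this count by splitting the positions strictly between these two endpoints according to the runner they lie on.

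The positions in the interval $((k-a)e+i,\,ke+i)$ are of three types. First, the positions on runner $i$ itself, at levels $k-a+1,\dots,k-1$; by hypothesis exactly $t$ of these are vacant. Second, for each runner $j>i$, the positions $k'e+j$ with $k-a\le k'\le k-1$; there are $a$ such levels. Third, for each runner $j<i$, the positions $k'e+j$ with $k-a+1\le k'\le k$; again there are $a$ such levels. The claim is then that every position of the second type is a bead and every position of the third type is a gap, so the total count is $0\cdot a(e-1-i)+a\cdot i+t=ai+t$.

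The key input is the RoCK property stated just before the lemma: a bead on runner $j$ at level $k$ forces beads on every runner to its right at every level $\le k$. For the second type I apply this to the bead at position $ke+i$: every runner $j>i$ is occupied at all levels $\le k$, in particular at levels $k-a,\dots,k-1$. For the third type I apply the contrapositive to the gap at position $(k-a)e+i$. If some runner $j<i$ had a bead at a level $k'\ge k-a$, then runner $i$, lying to its right, would be occupied at every level $\le k'$, including level $k-a$. This is a contradiction, so runner $j$ is vacant at all levels $\ge k-a$, and in particular at levels $k-a+1,\dots,k$.

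The only delicate step is justifying the RoCK wall property itself, which the paper states informally. If necessary I would verify it directly from the core: on runner $j$ the core has $w(j+1)$ beads, so each runner has $w$ more beads than the runner to its left. Since $\lambda$ is obtained from the core by sliding beads down a total of $w$ times, a bead on runner $j$ can descend at most $w$ levels below its core position. Its level is therefore at most the top of the core's runner $j+1$ stack minus one, which lies inside the fully occupied region of runner $j+1$. Iterating this to the right, or directly comparing with runners further right whose cores are even longer, the position at level $k$ on runner $j'>j$ stays occupied. This holds because runner $j'$ can have lost beads only through downward slides, and by the same weight bound those slides cannot vacate a level this high. Once this is settled, the count above is routine bookkeeping. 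I would also double-check the endpoint conventions for the case $i=0$ and for the shifted $\beta$-set $B_{e/2}$ used when $e$ is even and $w$ is odd; both are harmless, since only relative positions matter.
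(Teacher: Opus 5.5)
Your proposal is correct and is essentially the paper's proof. It uses the same runner-by-runner split of the positions strictly between $(k-a)e+i$ and $ke+i$, with the RoCK wall property applied to the bead at $ke+i$ (runners to the right are occupied at levels $k-a,\dots,k-1$) and, contrapositively, to the gap at $(k-a)e+i$ (runners to the left are vacant at levels $k-a+1,\dots,k$).

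The paper takes the wall property as known. If you do justify it, you must bound the two runners jointly, because for $j'=j+1$ the single-runner bounds overlap. Let $c_j$ be the first vacant level of runner $j$ in the core. A bead at level $k$ on runner $j$ forces at least $k-c_j+1$ slides on runner $j$, and a gap at level $k'\le k$ on runner $j'>j$ forces at least $c_{j'}-k'$ slides on runner $j'$. Since $c_{j'}-c_j=w(j'-j)$, these together exceed $w$.
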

 
\begin{proof}
The proof is illustrated in Figure \ref{fig:rock-arm-length}. By Lemma~\ref{lem:hook}, the arm length equals $\vac{e(k-a)+i}{ek+i}{\lambda}$, the
number of vacant positions of $\lambda$ strictly between $e(k-a)+i$ and $ek+i$.

Using the property of RoCK blocks mentioned before this lemma and the definition of $t$ we conclude that:
\begin{itemize}
    \item All positions of $\lambda$ on runner $i+1, i+2,\ldots,e-1$ at levels $k-a,k-a+1,\ldots,k-1$ are \emph{occupied} (since position of $\lambda$ on runner $i$ on level $k$ is occupied).
    \item Precisely $t$ vacant positions of $\lambda$ on runner $i$ lie strictly between levels $k-a$ and $k$.
    \item All positions of $\lambda$ on runner $0,\ldots,i-1$ at levels $k-a+1,k-a+2,\ldots,k$ are \emph{vacant} (since position of $\lambda$ on runner $i$ on level $k-a$ is vacant).
\end{itemize}
Hence the arm length is $ai + t$.
\end{proof}
 
\begin{figure}[ht]
\centering
\begin{tikzpicture}[>=Stealth, scale=1.0]
 
  \def\rs{0.72}
  \def\rw{0.85}
  \def\br{0.13}
  \def\dl{0.10}
  \def\hw{0.28}
 
  \def\yka{0}
  \def\ykthree{-\rs}
  \def\yktwo{-2*\rs}
  \def\ykone{-3*\rs}
  \def\yk{-4*\rs}

  \fill[red!15, rounded corners=2pt]
    (-\hw, \ykthree+0.24) rectangle (2*\rw+\hw, \yk-0.24);

  \fill[blue!15, rounded corners=2pt]
    (3*\rw-\hw, \ykthree+0.24) rectangle (3*\rw+\hw, \ykthree-0.24);
  \fill[blue!15, rounded corners=2pt]
    (3*\rw-\hw, \ykone+0.24) rectangle (3*\rw+\hw, \ykone-0.24);

  \foreach \x in {0, \rw, 2*\rw, 3*\rw, 4*\rw} {
    \draw[thick] (\x, 0.35) -- (\x, -4.35*\rs);
  }
 
  \node[above, font=\small] at (0,     0.35) {$0$};
  \node[above, font=\small] at (\rw,   0.35) {$1$};
  \node[above, font=\small] at (2*\rw, 0.35) {$\cdots$};
  \node[above, font=\small] at (3*\rw, 0.35) {$i$};
  \node[above, font=\small] at (4*\rw, 0.35) {$i{+}1$};
  \node[above, font=\small] at (5*\rw, 0.35) {$\cdots$};
 
  \node[left, font=\small] at (-0.15, \yka)     {$k{-}4$};
  \node[left, font=\small] at (-0.15, \ykthree) {$k{-}3$};
  \node[left, font=\small] at (-0.15, \yktwo)   {$k{-}2$};
  \node[left, font=\small] at (-0.15, \ykone)   {$k{-}1$};
  \node[left, font=\small] at (-0.15, \yk)      {$k$};
 
  \draw (0-\dl,     \yka) -- (0+\dl,     \yka);
  \draw (\rw-\dl,   \yka) -- (\rw+\dl,   \yka);
  \draw (2*\rw-\dl, \yka) -- (2*\rw+\dl, \yka);
  \draw (3*\rw-\dl, \yka) -- (3*\rw+\dl, \yka);
  \fill (4*\rw, \yka) circle (\br);
 
  \draw (0-\dl,     \ykthree) -- (0+\dl,     \ykthree);
  \draw (\rw-\dl,   \ykthree) -- (\rw+\dl,   \ykthree);
  \draw (2*\rw-\dl, \ykthree) -- (2*\rw+\dl, \ykthree);
  \draw (3*\rw-\dl, \ykthree) -- (3*\rw+\dl, \ykthree);
  \fill (4*\rw, \ykthree) circle (\br);
 
  \draw (0-\dl,     \yktwo) -- (0+\dl,     \yktwo);
  \draw (\rw-\dl,   \yktwo) -- (\rw+\dl,   \yktwo);
  \draw (2*\rw-\dl, \yktwo) -- (2*\rw+\dl, \yktwo);
  \fill (3*\rw, \yktwo) circle (\br);
  \fill (4*\rw, \yktwo) circle (\br);
 
  \draw (0-\dl,     \ykone) -- (0+\dl,     \ykone);
  \draw (\rw-\dl,   \ykone) -- (\rw+\dl,   \ykone);
  \draw (2*\rw-\dl, \ykone) -- (2*\rw+\dl, \ykone);
  \draw (3*\rw-\dl, \ykone) -- (3*\rw+\dl, \ykone);
  \fill (4*\rw, \ykone) circle (\br);
 
  \draw (0-\dl,     \yk) -- (0+\dl,     \yk);
  \draw (\rw-\dl,   \yk) -- (\rw+\dl,   \yk);
  \draw (2*\rw-\dl, \yk) -- (2*\rw+\dl, \yk);
  \fill[white] (3*\rw, \yk) circle (\br);
  \draw        (3*\rw, \yk) circle (\br);
  \fill (4*\rw, \yk) circle (\br);

  \draw[->, thick, blue!75!black]
    (3*\rw, \yk+\br+0.04) -- (3*\rw, \yka-\br-0.04);

  \def\labelx{5.2}
  \def\tickend{4.85}

  \draw[->] (\tickend, \yka-0.22) -- (3*\rw+\br+0.05, \yka-0.22)
    -- (3*\rw+\br+0.05, \yka);
  \node[right, font=\scriptsize] at (\labelx, \yka-0.22) {to here};

  \draw[->] (\tickend, \yk+0.22) -- (3*\rw+\br+0.05, \yk+0.22)
    -- (3*\rw+\br+0.05, \yk);
  \node[right, font=\scriptsize] at (\labelx, \yk+0.22) {hook goes from here};

  \node[font=\scriptsize, red!70!black, left] at (-0.35, 0.5*\ykthree+0.5*\yk)
    {$ai$ vacancies};

  \node[font=\scriptsize, blue!70!black] at (\labelx+0.8, 0.5*\ykthree+0.5*\ykone)
    {$t=2$ gaps};

  \foreach \x in {0, \rw, 2*\rw, 3*\rw, 4*\rw} {
    \node at (\x, -4.65*\rs) {$\vdots$};
  }
 
\end{tikzpicture}
\caption{Illustration of Lemma~\ref{lemma:armlengthinrockblocks} with $a=4$ and
$t=2$. The arm length is $ai+t$.}
\label{fig:rock-arm-length}
\end{figure}
We now determine exactly which abacus configurations are allowed for $d$-balanced $e$-regular partitions in a RoCK block. If runner $0$ of partition $\lambda$ in a RoCK block has a bead on level $k$, then all $e$ runners on level $k$ are occupied by a bead; in turn, if there is a gap on runner $0$ above a bead, then $\lambda$ is not $e$-regular (see Remark~\ref{re:beta-set}(4)). So runner $0$ of an $e$-regular partition in a RoCK block must remain \textit{flush to the top} (no bead on runner $0$ in the $e$-core can slide down).
The following two lemmas progressively restrict the remaining runners.
 
\begin{lemma}\label{lemma:whichrunners}
Suppose $\lambda$ is a $d$-balanced partition in a RoCK block and $i \in \{0, 1, \dots, e-1\}$. If $d \nmid i$ then the beads on runner $i$ are all flush to the top of the abacus. Assuming further that $\lambda$ is also $e$-regular, the only runners, on which beads may slide down are $d, 2d, \ldots, \left\lfloor\frac{e-1}{d}\right\rfloor d$.
\end{lemma}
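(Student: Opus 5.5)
We argue by contraposition, using Lemma~\ref{lemma:armlengthinrockblocks}. Suppose some bead on runner $i$ is not flush to the top, i.e.\ runner $i$ of $\lambda$ has a gap strictly above some bead. Among all such (gap, bead) pairs on runner $i$, choose a bead at level $k$ and a gap at level $k-1$ directly above it; this is always possible, since if there is a gap above a bead, scanning downward from the gap we eventually meet the first bead, and the position just above it is a gap. By Lemma~\ref{lem:hook}, this pair gives an $e$-hook of $\lambda$ on runner $i$ going from level $k$ to level $k-1$. Here $a=1$ and there are no positions strictly between the two levels, so $t=0$. Lemma~\ref{lemma:armlengthinrockblocks} then says its arm length is $1\cdot i+0=i$. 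If $\lambda$ is $d$-balanced, this forces $d\mid i$. Hence, when $d\nmid i$, no such pair exists and runner $i$ is flush to the top. This is essentially immediate once the adjacent-level hook is chosen, so I expect no real obstacle in this part.

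For the second statement, assume in addition that $\lambda$ is $e$-regular. The paragraph preceding the lemma already shows that runner $0$ must be flush to the top. Combined with the first part, the only runners that can fail to be flush are the $i\in\{1,\dots,e-1\}$ with $d\mid i$, namely $i=d,2d,\dots,\lfloor\frac{e-1}{d}\rfloor d$.

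The one point to double-check is the RoCK abacus convention when $e$ is even and $w$ is odd, where $B_{e/2}$ is used instead of the canonical $\beta$-set. Remark~\ref{re:Other r} ensures that Lemma~\ref{lem:hook} still applies, and Lemma~\ref{lemma:armlengthinrockblocks} was stated in this same convention, so the argument goes through unchanged.
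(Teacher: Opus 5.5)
Your proof is correct and follows the paper's argument. An adjacent gap--bead pair on runner $i$ gives an $e$-hook of arm length $i$ by Lemma~\ref{lemma:armlengthinrockblocks} with $a=1$ and $t=0$, and the $e$-regular statement then follows because runner $0$ must be flush to the top. Explicitly choosing a gap directly above a bead, and checking the $B_{e/2}$ convention, makes precise what the paper leaves implicit.
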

 
\begin{proof}
If any bead on runner $i$ can slide up on the abacus, that is, if there is a vacant position above an occupied position, we obtain a corresponding rim
$e$-hook. By Lemma~\ref{lemma:armlengthinrockblocks} applied with $a=1$ and $t=0$, this $e$-hook
has arm length $i$. For $\lambda$ to be $d$-balanced we need $d \mid i$.
Together with the discussion about $e$-regular partitions, this finishes the proof.
\end{proof}

\begin{lemma}\label{le:notwo}
    Suppose $\lambda$ is a $d$-balanced partition in a RoCK block. There is at most one gap on the same runner and above any given bead of $\lambda$.
\end{lemma}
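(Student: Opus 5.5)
Assume for contradiction that some bead of $\lambda$, say on runner $i$ at level $k$, has two gaps above it on runner $i$. Among all such configurations, pick the bead and two gaps as close together as possible: let $k_1<k_2<k$ be levels with gaps at $k_1e+i$ and $k_2e+i$. Minimality will let us control the positions strictly between these levels on runner $i$. The plan is to apply Lemma~\ref{lemma:armlengthinrockblocks} to two $e$-divisible hooks hanging from the same bead and compare their arm lengths modulo $d$.

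The first hook goes from level $k$ to level $k_2$. Its length is $a=k-k_2$, and let $t$ be the number of gaps on runner $i$ strictly between levels $k_2$ and $k$; its arm length is $(k-k_2)i+t$. The second hook goes from level $k$ to level $k_1$. Its length is $a'=k-k_1$, and the number of gaps strictly between levels $k_1$ and $k$ is $t'=t+1+s$, where the extra $1$ is the gap at $k_2$ and $s$ counts gaps strictly between levels $k_1$ and $k_2$. Its arm length is $(k-k_1)i+t+1+s$.

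Now choose the configuration so that $s=0$ and $t=0$: take $k$ to be a bead, $k_2$ the nearest gap above it, and $k_1$ the next gap above that. Every level strictly between $k_2$ and $k$ is then a bead, so $t=0$. Every level strictly between $k_1$ and $k_2$ is also a bead, so $s=0$. Such a choice exists whenever any bead has at least two gaps above it on its runner: take the highest two gaps above that bead and the bead itself, then move the bead up to the first bead below $k_2$.

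With this choice, $d$-balancedness gives $d\mid (k-k_2)i$ and $d\mid (k-k_1)i+1$. Lemma~\ref{lemma:whichrunners} also applies. If $d\nmid i$, runner $i$ is flush and has no gaps above beads at all, so we may assume $d\mid i$. Then the second condition becomes $d\mid 1$, which contradicts $d>1$.

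The one step that needs care is that Lemma~\ref{lemma:armlengthinrockblocks} holds for hooks of any length $a$ in a RoCK block; it is stated for general $a$, so it applies. The rest is bookkeeping: the existence of the adjacent choice, and noting that the case $d\nmid i$ is already handled by Lemma~\ref{lemma:whichrunners}. In fact, the single hook from $k$ to $k_1$ alone already gives the contradiction once $d\mid i$, because it passes over exactly one gap.
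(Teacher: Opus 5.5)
Your proof is correct and is essentially the paper's: the hook from a bead to the second-nearest gap above it on runner $i$ passes over exactly one gap, so Lemma~\ref{lemma:armlengthinrockblocks} gives arm length $ai+1$, which $d$ cannot divide once Lemma~\ref{lemma:whichrunners} forces $d\mid i$. Your existence remark about moving the bead up is superfluous, and its wording is ambiguous: defining $k_2$ and $k_1$ as the nearest and next-nearest gaps above the given bead already forces $s=t=0$, and the hook from $k$ to $k_2$ plays no role.
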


\begin{proof}
    Suppose for contradiction, that this fails to hold. Then there is a runner $i$ with a bead on level $k$ and gaps on levels $k-1$ and $k-b$ (with $b>1$). Making $b$ smaller if necessary, we assume that apart from levels $k-1$ and $k-b$ all levels $k, k-1, \ldots, k-b$ of runner $i$ contain beads. By Lemma~\ref{lemma:armlengthinrockblocks} applied with $a=b$ and $t=1$, there is an $e$-divisible hook with arm length $bi+1$. Thus $d\mid bi+1$, but also $d\mid i$ by Lemma~\ref{lemma:whichrunners}(1), contradicting $d>1$. This finishes the proof.
\end{proof}
 
Lemma~\ref{le:notwo} states that on any runner of a $d$-balanced partition in a RoCK block, the only permitted pattern is:
\begin{center}
  \abacus(m,b,b,b,n,b,b,b,v).
\end{center}
That is, at most a single gap in the runner, with beads below it. Such configurations
correspond to some number of beads $m \geq 0$, each having slid down once
from their $e$-core position, contributing $m$ to the $e$-weight. We are ready to prove our first main theorem.
 
\medskip\noindent\textbf{Proof of Theorem~\ref{thm:maincountRouquier}.}
By Corollary~\ref{cor:samenumbereveryblockweightw}, it suffices to count the desired partitions in the RoCK block of $e$-weight $w$. By Lemma~\ref{lemma:whichrunners}, only runners $d, 2d, \ldots, \left\lfloor\frac{e-1}{d}\right\rfloor d$
may have any beads in the $e$-core slid down. By Lemma~\ref{le:notwo},
each such runner has at most a single vacant position with beads below it, so on that runner some number of beads in the $e$-core have each slid down exactly once.

Conversely, any partition in this RoCK block obtained by sliding some number of beads in the $e$-core on runners $d, 2d, \ldots, \left\lfloor\frac{e-1}{d}\right\rfloor d$ down exactly once is $d$-balanced: any $e$-divisible hook correspond to a bead at level $k$ and on runner $i$ such that $d\mid i$ and the \emph{unique} (if it exists) gap above it, say, on level $k-a$; by Lemma~\ref{lemma:armlengthinrockblocks} applied with $a$ and $t=0$, the arm length is $ai$, which is divisible by $d$. These partitions are also $e$-regular as runner $0$ remains flash to the top.

Thus the number of $d$-balanced $e$-regular partitions in the RoCK block of $e$-weight $w$ is the number of ways to write $w$ as a sum of $\left\lfloor\frac{e-1}{d}\right\rfloor$ non-negative integers
(which are the numbers of beads that slid down on the allowed runners). By the standard stars-and-bars argument, this equals
\[
  \binom{w + \left\lfloor\frac{e-1}{d}\right\rfloor - 1}{w},
\]
as required. \qed
\begin{remark}
\label{remark:equotient1d}
In the language of $e$-quotients, we have shown that a $d$-balanced $e$-regular partition in a RoCK block has nonempty $e$-quotient
component only at non-zero runners which are a multiple of $d$, and each such component is
of the form $(1^m)$ for some $m \geq 0$.
\end{remark}

There is a neat way to find the number of all (both $e$-regular and $e$-singular) $d$-balanced partitions in a block by using a map $\psi$, which appends $e$ ones to a partition. That is, if $\lambda = (\lambda_1, \lambda_2, \dots, \lambda_t)$, then $\psi(\lambda) = (\lambda_1, \lambda_2, \dots, \lambda_t, 1,\dots, 1)$, where there are $e$ extra ones. We make a simple observation

\begin{lemma}\label{le:hook lengths}
    Let $\lambda$ be a partition and $(r,c)\in [\lambda]$ be its node. The arm length $a_{rc}(\lambda)$ equals the arm length $a_{rc}(\psi(\lambda))$. Moreover, $e$ divides the hook length $h_{rc}(\lambda)$ if and only if it divides the hook length $h_{rc}(\psi(\lambda))$.
\end{lemma}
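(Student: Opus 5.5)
The argument is a direct comparison of the Young diagrams of $\lambda$ and $\psi(\lambda)$. Write $\lambda=(\lambda_1,\dots,\lambda_t)$ with $\lambda_t>0$. Then $\psi(\lambda)$ agrees with $\lambda$ in rows $1,\dots,t$ and has $e$ additional rows $t+1,\dots,t+e$, each of length $1$. In other words, $[\psi(\lambda)]$ is $[\lambda]$ with $e$ extra nodes appended to column $1$. Consequently $\psi(\lambda)_r=\lambda_r$ for $r\le t$, while $\psi(\lambda)'_1=\lambda'_1+e$ and $\psi(\lambda)'_c=\lambda'_c$ for $c\ge 2$.

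For the arm lengths, any node $(r,c)\in[\lambda]$ has $r\le t$. Therefore
\[
a_{rc}(\psi(\lambda))=\psi(\lambda)_r-c=\lambda_r-c=a_{rc}(\lambda).
\]

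For the hook lengths, we compare leg lengths. If $c\ge 2$, then $\ell_{rc}(\psi(\lambda))=\lambda'_c-r=\ell_{rc}(\lambda)$. If $c=1$, then $\ell_{r1}(\psi(\lambda))=\lambda'_1+e-r=\ell_{r1}(\lambda)+e$. Since the hook length is $h=a+\ell+1$, it follows that $h_{rc}(\psi(\lambda))-h_{rc}(\lambda)$ is either $0$ or $e$. The two hook lengths are thus congruent modulo $e$, so $e$ divides one exactly when it divides the other.

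There is no real obstacle here. The only point needing care is to state explicitly that the nodes of $[\lambda]$ occupy only the first $t$ rows, so that the new rows never change any arm length. This makes it clear that $\psi$ changes a leg length only in column $1$, and there by exactly $e$.
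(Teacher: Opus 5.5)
Your proof is correct and takes the same route as the paper's. Arm lengths are unchanged because rows $1,\dots,t$ are untouched, and hook lengths change by $0$ or by $e$ according to whether the node lies outside or inside column $1$. Your case split on $c$ is the right one: the paper's one-line proof phrases it as $r>1$ versus $r=1$, which appears to be a slip for $c>1$ versus $c=1$.
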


\begin{proof}
    The first assertion is clear. For the second one, we use that $h_{rc}(\psi(\lambda)) = h_{rc}(\lambda)$ if $r>1$ and $h_{rc}(\psi(\lambda)) = h_{rc}(\lambda)+e$ if $r=1$.
\end{proof}

With this observation, we can describe the behaviour of $\psi$ applied to $d$-balanced partitions. In the proof, we work with canonical $e$-abaci.

\begin{lemma}\label{le:add a column}
    Let $\gamma$ be an $e$-core partition and $w\geq 0$ an integer. The map $\psi$ restricts to a bijection from the set of $d$-balanced partitions with $e$-core $\gamma$ and $e$-weight $w$ to the set of $d$-balanced $e$-singular partitions with $e$-core $\gamma$ and $e$-weight $w+1$.
\end{lemma}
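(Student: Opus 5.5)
We would prove this in three steps: $\psi$ preserves $e$-core, shifts $e$-weight by one, and produces $e$-singular partitions; $\psi$ preserves the $d$-balanced property; every $d$-balanced $e$-singular partition in the target set is of the form $\psi(\mu)$ for a $d$-balanced $\mu$. Injectivity of $\psi$ is clear.

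For the first step, work on the canonical $e$-abacus. If $\lambda$ has $t$ parts, take the $\beta$-set $B_t(\lambda)$. Then $\psi(\lambda)$ corresponds to $B_{t+e}(\psi(\lambda))$. This $\beta$-set is obtained from $B_t(\lambda)$ by shifting every bead at positions $\geq 0$ down by $e$ and filling positions $0,\dots,e-1$ with beads. On the abacus, every runner gets one extra bead at the top and all other beads slide down one level. Counting slides then shows the $e$-core is unchanged and the weight rises by exactly one. Equivalently, $\psi(\lambda)$ has a rim $e$-hook (the new column of $e$ ones) whose removal gives $\lambda$, and Remark~\ref{re:rim hook removal} applies. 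The partition $\psi(\lambda)$ has at least $e$ parts equal to $1$, so it is $e$-singular.

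For the second step we use Lemma~\ref{le:hook lengths}. The nodes of $\psi(\lambda)$ are the nodes of $\lambda$ together with the new nodes $(t+j,1)$ for $1\le j\le e$. On the old nodes, arm lengths agree and $e$-divisibility of hook lengths agrees, so those hooks satisfy the $d$-balanced condition exactly when the corresponding hooks of $\lambda$ do. The new nodes all have arm length $0$, which is divisible by $d$. Hence $\psi(\lambda)$ is $d$-balanced if and only if $\lambda$ is. This equivalence also gives the converse direction needed in the last step.

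For the surjectivity step, let $\nu$ be $d$-balanced and $e$-singular with $e$-core $\gamma$ and weight $w+1$. It suffices to show that $\nu$ has at least $e$ parts equal to $1$. Then $\nu=\psi(\mu)$, where $\mu$ is $\nu$ with $e$ ones deleted; $\mu$ is $d$-balanced by the second step and lies in the right block by the first step. Since $\nu$ is $e$-singular, some part $x$ is repeated at least $e$ times, say $\nu_r=\dots=\nu_{r+e-1}=x$ with $r+e-1$ maximal among the repeated rows. We argue by contradiction, assuming $x\ge 2$. The hook at node $(r+e-1-s,c)$ for appropriate $s,c$ should have length divisible by $e$ but arm length not divisible by $d$.

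The cleanest route is through the abacus. The $e$ repeated parts equal to $x$ give beads at $e$ consecutive positions with a gap below them (Remark~\ref{re:beta-set}(4)). Hence on every runner there is a bead above a gap, and each such pair gives an $e$-hook by Lemma~\ref{lem:hook}. Consider the gap $g$ immediately preceding the block of $e$ consecutive beads, and the bead $g+e$ on the same runner. Their hook has arm length $\vac{g}{g+e}{\nu}=0$. Instead we use the bead $g+1$ together with the gap on runner $g+1$ below it, namely at position $g+1-e$. The arm length of that hook is the number of gaps strictly between $g+1-e$ and $g+1$, and this includes $g$.

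The main obstacle is choosing a pair that is guaranteed to have arm length $\not\equiv 0 \pmod d$ when $x\ge 2$. If $x\ge2$, the $x-1\ge1$ gaps just before the block lie at positions $g, g-1,\dots$ (the rows below contribute positions well below). Pairing the bead $g+j$ with the gap $g+j-e$ for suitable $j$ yields arm lengths that vary by exactly $1$ as $j$ changes, so not all of them can be divisible by $d>1$. We would make this precise by examining two adjacent choices $j$ and $j+1$ where exactly one new gap enters the interval, in the spirit of Lemma~\ref{lem:shift}. If $x=1$, the repeated parts are ones and the deletion is possible.
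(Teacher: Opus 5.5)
Your first two steps agree with the paper. The rim-hook sentence is the right justification for the first step. Your abacus picture there is not a valid slide count, since it compares abaci with different numbers of beads; on a common abacus, $\psi$ slides one bead down one level. You do not need that picture.

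\textbf{The gap is in the surjectivity step}, where your key claims fail. The gaps below $g$ need not sit at $g-1,g-2,\dots$, because beads coming from parts smaller than $x$ can lie between $g$ and the next gap. So position $g+j-e$ need not be a gap, and the pairs you want to compare may not exist. It is also false that every runner carries an $e$-hook. More decisively, no argument that only inspects hooks of length exactly $e$ can succeed. Take $e=3$ and $\nu=(2,2,2,1,1)$, so $x=2$. Its only $3$-hook is at $(1,2)$, with arm length $0$. Its only other $3$-divisible hook is the $6$-hook at $(1,1)$, with arm length $1$. On the canonical abacus the block of beads is $-1,0,1$, so $g=-2$. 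Positions $-4,-3$ are beads and the next gap is $-5=g-e$, so none of your pairs (bead $g+j$, gap $g+j-e$) with $j<e$ exists.

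\textbf{The missing idea is to use minimality to locate the gap nearest to $g$.} Take $g$ to be the \emph{least} gap followed by $e$ beads. Picking the lowest repeated block is the right instinct, but your argument never uses that choice.
\begin{itemize}
\item If every position below $g$ is a bead, the $e$ block beads give $e$ parts equal to $1$, and you are done.
\item Otherwise let $g-j$ be the largest gap below $g$. If $j>e$, then $g-j$ is itself a gap followed by $e$ beads, contradicting minimality. Hence $j\le e$.
\item If $j<e$, the bead $g-j+e$ lies in the block, and $\vac{g-j}{g-j+e}{\nu}=1$ because $g$ is the only gap in between. This is an $e$-hook of arm length $1$.
\item If $j=e$, the bead $g+e$ and the gap $g-e$ give a $2e$-hook with $\vac{g-e}{g+e}{\nu}=1$.
\end{itemize}
Either way $d>1$ is contradicted. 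Your ``arm lengths change by one'' heuristic does work if you step through successive gaps rather than successive values of $j$, and it recovers the case $j<e$. The $2e$-hook in the case $j=e$, however, cannot be avoided.
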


\begin{proof}
    By Remark~\ref{re:rim hook removal}, $\phi$ preserves $e$-cores and increases $e$-weight by $1$ and always inputs an $e$-singular partition. Moreover, for any partition $\lambda$ we have that $\lambda$ is $d$-balanced if and only if $\psi(\lambda)$ is $d$-balanced: indeed, this is Lemma~\ref{le:hook lengths} and the fact that the only $e$-divisible hook of $\psi(\lambda)$ at node $(r,c)\in[\psi(\lambda)]\setminus [\lambda]$ has arm length $0$.

    Since this restriction of $\psi$ is clearly injective, we only need to show that any $d$-balanced $e$-singular partition $\mu$ lies in the image of $\psi$, that is, $\mu$ has at least $e$ parts equal to $1$. To do so, we pass to the $e$-runner abacus representation of $\mu$.

    Since $\mu$ is $e$-singular, there is a gap in some position $f$ on the abacus of $\mu$ such that all positions $f+1, f+2,\dots, f+e$ are occupied with beads. Take $f$ to be the least such position. If all positions less than $f$ are occupied, we are done (see Remark~\ref{re:beta-set}(4)); we show this must be the case. Otherwise, there is the least $j>0$ such that position $f-j$ is a gap. By the minimal choices of $f$ and $j$ we have $j\leq e$. If $j<e$, then $\vac{f-j}{f-j+e}{\mu}=1$ is an arm length of an $e$-hook, contradicting that $\mu$ is $d$-balanced. If $j=e$, then $\vac{f-e}{f+e}{\mu}=1$ is an arm length of a $2e$-hook, again, contradiction that $\mu$ is $d$-balanced. This finishes the proof.
\end{proof}

Combining Lemma~\ref{le:add a column} and Theorem~\ref{thm:maincountRouquier} we immediately get the desired number.

\begin{corollary}\label{co:all d-balanced}
Let $d, e >1$ and $w\geq 0$ be integers. The number of $d$-balanced partitions in a block of $e$-weight $w$ is
  \[
    \binom{w + \left\lfloor\frac{e-1}{d}\right\rfloor}{w}.
  \]
\end{corollary}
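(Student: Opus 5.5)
The plan is an induction on the weight $w$, splitting the $d$-balanced partitions in a block into $e$-regular and $e$-singular ones. Fix an $e$-core $\gamma$ and write $N(w)$ for the number of $d$-balanced partitions with $e$-core $\gamma$ and $e$-weight $w$. Write $R(w)$ for the number of those that are $e$-regular. By Theorem~\ref{thm:maincountRouquier}, $R(w)=\binom{w+m-1}{w}$ with $m=\left\lfloor\frac{e-1}{d}\right\rfloor$, independently of $\gamma$.

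For $w\geq 1$, the $e$-singular $d$-balanced partitions with core $\gamma$ and weight $w$ are in bijection, via $\psi$, with all $d$-balanced partitions with core $\gamma$ and weight $w-1$. This is Lemma~\ref{le:add a column} applied with $w-1$ in place of $w$. For $w=0$, every partition in the block equals the core $\gamma$. A core is $e$-regular, since an $e$-singular partition has a gap followed by $e$ consecutive beads, and then a bead can slide up. So there are no $e$-singular ones at weight $0$. This gives the recursion
\[
N(w)=R(w)+N(w-1)\quad (w\geq 1),\qquad N(0)=R(0)=1 .
\]
The last equality holds because $\gamma$ has no $e$-divisible hooks and so is trivially $d$-balanced.

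Unrolling the recursion gives $N(w)=\sum_{j=0}^{w}\binom{j+m-1}{j}$. By the hockey-stick identity this equals $\binom{w+m}{w}$, as claimed.

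One degenerate case needs checking: $m=0$, which happens when $d\geq e$. Then $R(w)=\binom{w-1}{w}$ is $0$ for $w\geq1$ and $1$ for $w=0$, under the convention $\binom{-1}{0}=1$ that the statement of Theorem~\ref{thm:maincountRouquier} implicitly uses. The sum is then $1=\binom{w}{w}$, so the formula still holds.

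The only real content lies in the earlier lemmas. The point to verify carefully is that Lemma~\ref{le:add a column} gives exactly a bijection onto the $e$-singular part of weight $w$ with the same core. Any typo in its statement, such as $\phi$ versus $\psi$ or "inputs" versus "outputs", should not affect its use here.
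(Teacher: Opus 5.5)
Your proposal is correct and follows the paper's proof. Like the paper, you split the $d$-balanced partitions of weight $w$ into $e$-regular ones, counted by Theorem~\ref{thm:maincountRouquier}, and $e$-singular ones, which Lemma~\ref{le:add a column} puts in bijection with all $d$-balanced partitions of weight $w-1$, and then induct on $w$. The differences are cosmetic: the paper closes each inductive step with Pascal's rule $\binom{w+m-1}{w}+\binom{w+m-1}{w-1}=\binom{w+m}{w}$ where you unroll to the hockey-stick identity, and your extra checks (that cores are $e$-regular, and the case $m=0$) are correct but not needed.
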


\begin{proof}
    We induct on $w\geq 0$. If $w=0$, the only ($e$-core) partition in the block is $d$-balanced. Assuming that $w\geq 1$ and the result holds for $w-1$, the number of $d$-balanced $e$-regular partitions in a given block of $e$-weight $w$ is $\binom{w + \left\lfloor\frac{e-1}{d}\right\rfloor-1}{w}$ by Theorem~\ref{thm:maincountRouquier}. The number of $d$-balanced $e$-singular partitions in the same block is by Lemma~\ref{le:add a column} and the induction hypothesis $\binom{w + \left\lfloor\frac{e-1}{d}\right\rfloor-1}{w-1}$. Adding these two numbers proves the result.  
\end{proof}

Having fully enumerated $d$-balanced and $d$-balanced $e$-regular partitions, one may ask what happens if we want to consider partitions such that all their $e$-divisible hooks have arm lengths congruent to some fixed residue $r$ modulo $d$? If $r\equiv 0 \pmod{d}$, these are precisely $d$-balanced partitions. If $r\equiv -1 \pmod{d}$, these are the $d$-shift balanced partitions from \cite{turek2025mullineuxmapdbalancedpartitions}. As mentioned in the introduction, $d$-shift balanced $e$-restricted partitions are, in a way, dual to $d$-balanced $e$-regular partitions, namely, the composition of the Mullineux map and conjugation of partitions gives a bijection between these two families of partitions. For $d=2$ this is a consequence of the main results of \cite{turek2025mullineuxmapdbalancedpartitions}, while for $d>2$ one has to, in addition, use some upcoming results from the ongoing work of Gustavsson, Law, Putignano, Speyer and the second author.

Thanks to this duality, the number of $d$-shift balanced $e$-restricted partitions in a given block is the same as the number of $d$-balanced $e$-regular partitions in the same block. In particular, it depends only on the $e$-weight and not the $e$-core and is given by Theorem~\ref{thm:maincountRouquier}. One can further compute that there are
\[
\binom{w+\left\lfloor\frac{e}{d}\right\rfloor-1}{w}
\]
$d$-shift balanced partitions in a block of $e$-weight $w$. These results can be derived directly: one shows that the crystal (affine) reflections preserve \emph{conjugates} of $d$-shift balanced partitions using an analogous case-by-case analysis as in the proof of Theorem~\ref{thm:main} and then pivots to RoCK blocks to count $d$-shift balanced $e$-restricted partitions there. The `conjugate' version of the map $\psi$ can then be used to move from $e$-restricted to all such partitions. This approach also provides direct proof that the composition of the Mullineux map and conjugation of partitions gives a bijection between $d$-balanced $e$-regular and $d$-shift balanced $e$-restricted partitions thanks to the identity $\sigma_i\circ m_e = m_e\circ \sigma_{-i}$ (\cite[Theorem~4.7]{KleshchevBranchingIII96}), where $m_e$ is the Mullineux map and the index $-i$ is taken modulo $e$.

For residue classes other than $0$ and $-1$ modulo $d$, it is not true that the number of partitions with all arm lengths of its $e$-divisible hooks belonging to this class in a given block is independent of the $e$-core. For example, let $e=7$, $d=3$, and $w=2$, and require every $7$-divisible hook to have arm length congruent to $1$ modulo $3$. In the block with $7$-core $(1)$ and $e$-weight $2$ there are four such partitions:
\[
(3,2^5,1^2),\quad (6,2^2,1^5),\quad (6,5,3,1),\quad (8,2,1^5).
\]
In the block with $7$-core $(2)$ and $e$-weight $2$ there are only three:
\[
(3^2,2^4,1^2),\quad (6,3,2,1^5),\quad (6,5,4,1).
\]
The affine reflection $\sigma_1$ maps $(8,2,1^5)$ to $(9,2,1^5)$, and the latter has a $7$-hook in the first row with arm length $6$. Interestingly, the other six listed partitions pair up under this reflection. 
So while for any residue class modulo $d$, it is straightforward to compute the number of such partitions in a RoCK block, in a similar way as done for residue class $0$, this does not seem to yield the number for any arbitrary block when the residue class is not $0$ or $-1$ modulo $d$.

\section{Combinatorics of odd sequences}\label{se:odd}
 
There is a remarkable $e$-refinement of the number of odd parts in a partition, these are the \textit{odd sequences}. They were introduced and linked to the Mullineux map in \cite{turek2025mullineuxmapdbalancedpartitions} and in \cite{hemmerturek2026} were found to be crucial in constructing columns for decomposition matrices of symmetric groups and, conjecturally, give complete descriptions for indecomposable summands of so-called twisted Foulkes modules $H^{(2^m;k)}$ (proved in case $k=0$).
 
Odd sequences refine the statistic that counts how many odd parts a partition has by looking at the $e$-residues at the end of the odd rows in the Young diagram. The connection to $d$-balanced partitions for $d=2$ will emerge through Proposition~\ref{prop:turkdbalancedmaximaloddsequence}.
 
Let $\lambda \vdash n$ and $e > 1$ be odd.
 
\begin{definition}
\label{def:oddsequence}
The \emph{odd sequence} of $\lambda$ is a composition $O(\lambda):=(n_0, n_1, n_2, \cdots, n_{e-1})$ where $n_i$ is the number of parts $\lambda_j$ with $\lambda_j$ odd and the $e$-residue of the right-most node in row $j$ equal to $i$, i.e. $\lambda_j - j \equiv i \pmod{e}$.
\end{definition}
 
\begin{remark}
\label{remark: oddsequencecountsoddbeads}
If we represent $\lambda$ on an $e$-abacus with a multiple of $e$ beads, then $n_i$ counts the number of \textit{odd beads} on runner $i$, where an odd bead is defined as a bead with an odd number of vacant spaces in lower-numbered positions on the abacus.
\end{remark}
 
Now fix an $e$-core $\gamma$ and a composition $\theta=(n_0, n_1, \cdots, n_{e-1})$. It is an easy exercise (proved in more generality in \cite[Corollary~2.11]{turek2025mullineuxmapdbalancedpartitions}) that we can, starting with the abacus display for $\gamma$, slide beads down in such a way to ensure there are $n_i$ odd beads on runner $i$. That is:
 
\begin{lemma}
\label{lem:omegaOiswelldefined}
  There exists a partition $\lambda$ with $e$-core $\gamma$ and odd sequence $O(\lambda)=\theta$.
\end{lemma}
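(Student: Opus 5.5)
The plan is to build $\lambda$ directly on the canonical $e$-abacus of $\gamma$, using Remark~\ref{remark: oddsequencecountsoddbeads}: $n_i$ is the number of beads on runner $i$ with an odd number of gaps at lower-numbered positions. I only slide beads down along their own runners, so the $e$-core of the result is automatically $\gamma$. The freedom I exploit is that sliding a single bead down one further level changes its gap count by exactly $e-1$ or $e$, depending on the beads it passes. When nothing lies beyond it, the change is $e$, which is odd. So once a bead is farther down than all other beads, I can fix its parity at will.

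\textbf{Step 1: clear out $\gamma$.} Choose a level $L$ such that all positions below $Le$ are beads of $\gamma$ and all beads of $\gamma$ lie at positions below some $L'e$. Beads at positions below $Le$ have zero gaps before them, so they are even. The beads of $\gamma$ at positions $\ge Le$ (finitely many) may have arbitrary parity, and they contribute uncontrolled counts $c_i$ that could exceed $n_i$. To remove this uncontrolled contribution, I will also move all of these beads. In addition, for each runner $i$ I select enough extra deep beads on runner $i$, lying below level $L$, to be moved. In the final configuration, every moved bead originating on runner $i$ must end up as one of the $n_i$ odd beads of runner $i$ or else be even. So I pick the sets of moved beads on each runner to have size at least $n_i$.

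\textbf{Step 2: design the final configuration.} The unmoved beads keep their positions. The vacated positions all lie below every moved bead's final position. I place the moved beads one at a time at successively larger levels, all beyond $L'$ and beyond every previously placed bead, each on its own runner. When a bead is placed, the gaps before it are the vacated positions plus all empty positions up to it. Previously placed beads are unaffected by later placements, because later beads sit strictly beyond them. Hence, by choosing between two consecutive admissible levels for each new bead (the difference is $e$, odd), I can make it odd or even as required. I choose the parities so that exactly $n_i$ of the beads on runner $i$ are odd.

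\textbf{Step 3: check the unmoved beads.} The unmoved beads all lie below $Le$. However, some vacated deep positions lie below some of them and could make them odd. I avoid this by choosing, on each runner, the vacated deep beads to be the topmost beads below level $L$. I then must ensure that no unmoved bead sits above a vacated position. I would therefore take $L$ to be one level deeper than the vacated levels after the choice and iterate. Concretely, I vacate a full block of levels $L-s,\dots,L-1$, moving every bead there. All remaining unmoved beads then lie below the lowest vacancy and have zero gaps before them. Thus all unmoved beads are even, and the odd sequence is exactly $\theta$.

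The main obstacle is the parity bookkeeping in Step 3: guaranteeing that no unmoved bead inherits odd parity from the vacancies. Vacating an entire block of levels, as described, is what I expect resolves it. The number of moved beads is then $e s$ plus the count from $\gamma$, and I choose $s$ large enough that each runner has at least $n_i$ moved beads.
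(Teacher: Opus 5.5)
Your construction is correct and is essentially the paper's approach: the paper only asserts, as an easy exercise (citing Corollary~2.11 of \cite{turek2025mullineuxmapdbalancedpartitions}), that one can slide the beads of $\gamma$ down along their runners so that runner $i$ carries exactly $n_i$ odd beads. The final version of your Step~3 supplies those details: you vacate all of levels $L-s,\dots,L-1$ together with the beads of $\gamma$ at positions $\ge Le$, taking $s\ge\max_i n_i$, and then place the moved beads successively beyond everything else, fixing each bead's parity by choosing between two consecutive levels since $e$ is odd. One small inaccuracy, which you never actually use: a single downward slide changes a bead's gap count by anywhere from $1$ to $e$, not ``exactly $e-1$ or $e$''; your argument only needs the case where nothing lies beyond the bead, and there the change is exactly $e$.
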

 
Lemma \ref{lem:omegaOiswelldefined} is false for $e$ even, as adding a rim $e$-hook preserves the parity of the sum of entries of $O(\lambda)$, so starting with the odd sequence of $\gamma$, we can only obtain odd sequences with equal parity of the sum of their entries. (This is in fact the only constraint needed to make Lemma~\ref{lem:omegaOiswelldefined} work for even $e$; see \cite[Corollary~2.11]{turek2025mullineuxmapdbalancedpartitions}.) For odd $e$, by contrast, adding a rim $e$-hook changes this parity.
Lemma \ref{lem:omegaOiswelldefined} justifies the following definitions:
 
\begin{definition}
\label{def:omega_O}
   Let $w_\theta(\gamma)$ denote the minimum $e$-weight of a partition with $e$-core $\gamma$ and odd sequence $\theta$.
\end{definition}
 
\begin{definition} 
\label{def:Ethetagamma}
   Let $\mathcal{E}_\theta(\gamma)$ denote the set of all partitions with $e$-core $\gamma$, $e$-weight $w_\theta(\gamma)$, and odd sequence $\theta$. Observe that each of these is a partition of $|\gamma|+ew_\theta(\gamma).$
\end{definition}
 
There is another way to look at Definition~\ref{def:omega_O}. We can consider the set of partitions with $e$-core $\gamma$ and $e$-weight $w$ as $w=0,1,2,\ldots$. For each $w$, we can look at the odd sequences that occur among these partitions. A sequence $\theta$ will appear for the first time at $e$-weight $w_\theta(\gamma)$, where we call it \textit{new}. We will count how many total odd sequences there are for each $w$, and how many are new. We do the same for odd sequences that contain a zero.
All four numbers will be independent of $\gamma$.
 
\begin{definition}
\label{def:Se(gamma)Tegamma}
For $w\ge 0$, let $S_e^\gamma(w)$ be the set of odd sequences arising from partitions with $e$-core $\gamma$ and $e$-weight $w$. Define
\[
T_e^\gamma(w):=\lvert S_e^\gamma(w)\rvert
\]
and
\[
Z_e^\gamma(w):=\lvert\left\{(n_0,n_1,\ldots,n_{e-1}) \in S_e^\gamma(w):\text{ there exists } i \text{ such that } n_i=0\right\}\rvert.
\]
\end{definition}

We define similar quantities for new odd sequences.

\begin{definition}
\label{def:newzerosequences}
Let $M_e^{\gamma}(w)$ denote the number of new odd sequences in the block of $e$-core $\gamma$ and $e$-weight $w$, i.e. the number of odd sequences $\theta \in S_e^\gamma(w)$ such that $\theta \notin S_e^\gamma(w')$ for all $w'<w$, and let $N_e^{\gamma}(w)$ be the number of these sequences which contains a zero.
\end{definition}

\begin{example}\label{ex:oddsequence-example-e5}
Take \(e=5\) and \(5\)-core \(\gamma=(4,2,1)\). Using the GAP algebra system \cite{GAP4} to verify the computations for
$e$-weights \(w=0,1,2,3\), we obtain the following sets of odd sequences.
 
For \(w=0\) (so \(n=7\)),
\[
S_5^\gamma(0)=\{(0,0,0,1,0)\}.
\]
Thus
\[
T_5^\gamma(0)=1,\qquad M_5^\gamma(0)=1,\qquad N_5^\gamma(0)=1,
\]
and the unique new sequence (containing a zero) is
\[
(0,0,0,1,0).
\]
For \(w=1\) (so \(n=12\)),
\[
S_5^\gamma(1)=\{(0,0,0,2,0),\ (1,1,0,0,0),\ (1,1,1,2,1)\},
\]
so
\[
T_5^\gamma(1)=3,\qquad M_5^\gamma(1)=3,\qquad N_5^\gamma(1)=2,
\]
with new sequences containing a zero
\[
(0,0,0,2,0),\qquad (1,1,0,0,0).
\]
For \(w=2\) (so \(n=17\)),
\[
\begin{aligned}
S_5^\gamma(2)=\{&(0,0,0,1,0),\ (1,0,0,2,0),\ (1,1,0,1,0),\ (1,1,1,3,1),\\
&(2,1,0,1,1),\ (2,2,1,1,1),\ (2,2,2,3,2)\},
\end{aligned}
\]
hence
\[
T_5^\gamma(2)=7,\qquad M_5^\gamma(2)=6,\qquad N_5^\gamma(2)=3,
\]
and the new sequences containing a zero are
\[
(1,0,0,2,0),\qquad (1,1,0,1,0),\qquad (2,1,0,1,1).
\]
For \(w=3\) (so \(n=22\)),
\[
\begin{aligned}
S_5^\gamma(3)=\{&(0,0,0,2,0),\ (1,0,0,1,0),\ (1,0,0,2,1),\ (1,1,0,0,0),\ (1,1,1,2,1),\\
&(2,1,0,1,0),\ (2,1,0,2,1),\ (2,1,1,3,1),\ (2,2,1,2,1),\ (2,2,2,4,2),\\
&(3,2,1,2,2),\ (3,3,2,2,2),\ (3,3,3,4,3)\},
\end{aligned}
\]
so
\[
T_5^\gamma(3)=13,\qquad M_5^\gamma(3)=10,\qquad N_5^\gamma(3)=4,
\]
with new sequences containing a zero
\[
(1,0,0,1,0),\qquad (1,0,0,2,1),\qquad (2,1,0,1,0),\qquad (2,1,0,2,1).
\]
These values follow patterns $N_5^\gamma(w)=w+1$, $M_5^\gamma(w) = \binom{w+1}{2}$ and $T_5^\gamma(w) = T_5^\gamma(w-2) + M_5^\gamma(w)$. In fact, the last equality can be strengthened as: $S_5^\gamma(w)$ is the union of the set of its new odd sequences and $S_5^\gamma(w-2)$. We also observe that the new odd sequences in $S_5^{\gamma}(w)$ without any zeros are the new odd sequences in $S_5^{\gamma}(w-1)$ with all entries increased by one.
\end{example}

We proceed to prove the observed properties of our three functions in Example~\ref{ex:oddsequence-example-e5} in a general setting. The formulas for $M_e^{\gamma}(w)$ and $N_e^\gamma(w)$ follow from the following surprising characterization of $2$-balanced partitions. This characterization implicitly uses the fact that the sets $\mathcal{E}_\theta(\gamma)$ have a unique maximal element in the dominance order; see \cite[Proposition~6.14]{turek2025mullineuxmapdbalancedpartitions}. We omit the definition of the dominance order here; it suffices to treat it as some partial order on the set of partitions.

\begin{proposition}[\cite{turek2025mullineuxmapdbalancedpartitions}, Proposition~1.6]
\label{prop:turkdbalancedmaximaloddsequence}
Suppose $\lambda$ has odd sequence $\theta$ and $e$-core $\gamma$. Then $\lambda$ is a maximal element in the dominance order of $\mathcal{E}_\theta(\gamma)$ if and only if $\lambda$ is $2$-balanced. In that case, $\lambda$ is $e$-regular if and only if $\theta$ contains a zero.
\end{proposition}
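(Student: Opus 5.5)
The plan is to reduce everything to RoCK blocks using the crystal reflections, in the same way Theorem~\ref{thm:maincountRouquier} was proved, with Theorem~\ref{thm:main} (for $d=2$) supplying one half of the invariance.

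\textbf{Step 1: invariance under $\sigma_i$.} I would first check on the abacus how the data in the statement change under a reflection $\sigma_i$ (with $e$ odd), extended to all partitions via the abacus description of bead moves between runners $i-1$ and $i$. The target facts are:
\begin{itemize}
\item $\sigma_i$ swaps the entries $n_{i-1}$ and $n_i$ of the odd sequence;
\item it sends $\mathcal{E}_\theta(\gamma)$ bijectively onto $\mathcal{E}_{s_i\theta}(\sigma_i(\gamma))$;
\item it preserves dominance-maximality.
\end{itemize}
For the first fact, a moved bead at level $k$ changes its number of lower vacancies only through the vacancy it leaves or fills. That vacancy lies at $ke+(i-1)$ or $ke+i$, immediately adjacent to the bead, so the parity of the moved bead is unchanged while its runner switches. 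Beads on other runners see an unchanged count: a bead that overtakes a moved one sees one vacancy traded for another at a lower position. Since $w_\theta(\gamma)$ is defined as a minimum over a block, the second fact follows from the first together with Proposition~\ref{prop:bijection}. The third fact I would check directly by comparing dominance before and after the simultaneous bead moves. Theorem~\ref{thm:main} shows that $\sigma_i$ preserves the $2$-balanced property, and $\sigma_i$ preserves $e$-regularity. Whether $\theta$ contains a zero is invariant under permuting its entries. So every part of the statement is invariant under $\sigma_i$, and by Proposition~\ref{prop:crystal-bijection} it suffices to treat a block whose core is a RoCK core relative to the relevant weight.

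\textbf{Step 2: odd sequences in a RoCK block.} In a RoCK block, I would use Lemma~\ref{lemma:armlengthinrockblocks} to compute parities of beads. The key observations are:
\begin{itemize}
\item a bead slid down once on runner $i$ passes exactly $i$ vacancies from the wall of beads on the runners to its left;
\item sliding a bead on an even runner by one level changes neither its own parity nor that of any other bead, so it does not change $\theta$, whereas sliding on an odd runner does change $\theta$.
\end{itemize}
From this I would describe $w_\theta(\gamma)$ explicitly, roughly as the number of odd-runner slides forced by $\theta$. I would then identify the dominance-maximal elements of $\mathcal{E}_\theta(\gamma)$ as those in which all the required slides are single slides of the lowest possible beads. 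These are exactly the configurations of Remark~\ref{remark:equotient1d}, with $e$-quotient components of the form $(1^m)$ placed on the appropriate runners. Lemmas~\ref{lemma:whichrunners} and \ref{le:notwo} then identify these configurations with the $2$-balanced partitions in the block. For the converse direction, any $2$-balanced partition has exactly this shape, and I would check that it has minimal weight for its own odd sequence.

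\textbf{Step 3: regularity.} By the discussion before Lemma~\ref{lemma:whichrunners}, such a partition is $e$-regular if and only if runner $0$ is flush to the top. I would translate this into the vanishing of a specific entry of $\theta$. Since Step 1 shows that the condition ``$\theta$ contains a zero'' is stable under the reflections, this gives the last claim.

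The main obstacle is Step 2, specifically proving that the maximal elements are exactly those configurations. This requires showing that any element of $\mathcal{E}_\theta(\gamma)$ with a different shape is strictly dominated by one of them. I would attempt this by an explicit local move: slide a lower bead up and a higher one down, compensating so that $\theta$ and the weight stay fixed, and check that this move raises the partition in dominance order.
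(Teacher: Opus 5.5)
\textbf{There is a genuine gap: $\sigma_i$ does not respect odd sequences, so the reduction to RoCK blocks fails, and the core claim is left unproved.}

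Your reduction in Step~1 rests on a wrong parity computation, and it cannot be repaired along these lines. When $\sigma_i$ moves a bead right from $ke+(i-1)$ to $ke+i$, the vacated position $ke+(i-1)$ lies \emph{below} the bead's new position. So its number of lower vacancies rises by exactly one; moved beads at lower levels contribute nothing net. A left move symmetrically loses the vacancy it fills. In Young-diagram terms, $\tilde f_i$ and $\tilde e_i$ change a single part by $1$. So every moved bead changes parity. For a right move, an odd bead leaving runner $i-1$ lowers $n_{i-1}$ by one, while an even one raises $n_i$ by one. The resulting change in $\theta$ therefore depends on $\lambda$, not only on $(\theta,\gamma)$.

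Two examples with $e=3$:
\begin{itemize}
\item $\sigma_1((1))=(2)$ sends the odd sequence $(1,0,0)$ to $(0,0,0)$, not to $(0,1,0)$.
\item $\sigma_0$ maps $\mathcal{E}_{(0,1,1)}(\emptyset)=\{(5,1),(4,1,1),(3,3),(3,2,1)\}$ to $(5,2)$, $(4,2,1)$, $(4,3)$, with odd sequence $(0,1,0)$, and to $(3,2,1,1)$, with odd sequence $(1,1,1)$.
\end{itemize}
Hence $\sigma_i$ does not carry $\mathcal{E}_\theta(\gamma)$ into any single $\mathcal{E}_{\theta'}(\sigma_i(\gamma))$. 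Theorem~\ref{thm:main} lets you move the $2$-balanced partitions between blocks. But the proposition compares such a partition with all partitions sharing its $e$-core and odd sequence: it must have least weight among them and be maximal among those of that weight. The reflections scatter these competitors. The same objection disposes of your Step~3 claim that ``$\theta$ contains a zero'' is invariant under the reflections.

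Step~2 repeats the error. In a RoCK abacus, a bead on runner $i$ sliding down one level gains $i+1$ lower vacancies: the $i$ it passes plus the one it leaves. Each of the $e-1-i$ beads it jumps over on runners $i+1,\dots,e-1$ also gains one. In any case no single slide can fix $\theta$. A slide changes $w$ by one, while $\sum_j n_j\equiv|\gamma|+w\pmod{2}$ by the argument of Lemma~\ref{le:parity}. For instance, take $e=3$, $w=1$ and the RoCK core $(3,1,1)$. Sliding its runner-$0$ bead gives $\psi((3,1,1))=(3,1^5)$ and changes $\theta$ from $(0,1,2)$ to $(1,2,3)$. So your proposed description of $w_\theta(\gamma)$ has no basis.

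Your use of Lemmas~\ref{lemma:whichrunners} and~\ref{le:notwo} to list the $2$-balanced partitions of a RoCK block is essentially right. They have quotient components $(1^m)$ on the even runners, with runner $0$ allowed for the $e$-singular ones. What remains is the entire content of the statement:
\begin{itemize}
\item each such partition has minimal weight for its odd sequence;
\item each is maximal in $\mathcal{E}_\theta(\gamma)$;
\item every maximal element is of this form.
\end{itemize}
You leave all of this as ``the main obstacle''. Note that the paper gives no proof of this proposition. It is quoted from \cite[Proposition~1.6]{turek2025mullineuxmapdbalancedpartitions}, together with the uniqueness of the maximal element \cite[Proposition~6.14]{turek2025mullineuxmapdbalancedpartitions}. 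A proof has to analyse the sets of partitions with a given core and odd sequence directly on the abacus. Alternatively, it needs a bijection between blocks that respects odd sequences and dominance, which $\sigma_i$ is not.
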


Thus, we have already computed $M_e^{\gamma}(w)$ and $N_e^\gamma(w)$ in the previous section.

\begin{corollary}
    \label{co:newzero-count}
    For every odd integer $e>1$, every $e$-core $\gamma$, and every integer $w\ge 0$,
    \[
    M_e^\gamma(w)=\binom{w+\frac{e-1}{2}}{w}.
    \]
    and
    \[
    N_e^\gamma(w)=\binom{w+\frac{e-3}{2}}{w}.
    \] 
\end{corollary}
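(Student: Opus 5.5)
The plan is to translate both counts, via Proposition~\ref{prop:turkdbalancedmaximaloddsequence}, into counts of $2$-balanced partitions, and then apply the enumerations of Section~\ref{se:RoCK} with $d=2$ and $e$ odd. Since $e$ is odd, $\left\lfloor\frac{e-1}{2}\right\rfloor=\frac{e-1}{2}$. So Corollary~\ref{co:all d-balanced} gives $\binom{w+\frac{e-1}{2}}{w}$ $2$-balanced partitions in the block with $e$-core $\gamma$ and $e$-weight $w$. Theorem~\ref{thm:maincountRouquier} gives $\binom{w+\frac{e-3}{2}}{w}$ of these that are also $e$-regular. It therefore suffices to exhibit a bijection between new odd sequences at weight $w$ and $2$-balanced partitions in this block, under which sequences containing a zero correspond to the $e$-regular ones.

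The bijection is $\theta\mapsto$ the maximal element of $\mathcal{E}_\theta(\gamma)$ in the dominance order. The steps are as follows.

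\textbf{Step 1: new sequences.} By Definition~\ref{def:omega_O}, $\theta$ is new at weight $w$ exactly when $w_\theta(\gamma)=w$. In that case $\mathcal{E}_\theta(\gamma)$ is a nonempty subset of the block of core $\gamma$ and weight $w$. By the cited \cite[Proposition~6.14]{turek2025mullineuxmapdbalancedpartitions} it has a unique maximal element $\lambda_\theta$, and $\lambda_\theta$ is $2$-balanced by Proposition~\ref{prop:turkdbalancedmaximaloddsequence}. The map $\theta\mapsto\lambda_\theta$ is injective because $O(\lambda_\theta)=\theta$.

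\textbf{Step 2: surjectivity.} Let $\lambda$ be a $2$-balanced partition with core $\gamma$ and weight $w$, and set $\theta=O(\lambda)$. The converse direction of Proposition~\ref{prop:turkdbalancedmaximaloddsequence}, read as stated ("$\lambda$ is maximal in $\mathcal{E}_\theta(\gamma)$ iff $2$-balanced"), should force $\lambda\in\mathcal{E}_\theta(\gamma)$. This gives $w=w_\theta(\gamma)$, so $\theta$ is new at $w$ and $\lambda=\lambda_\theta$. This is the delicate point. If the cited proposition only covers $\lambda$ already known to have minimal weight, I would need a separate argument that a $2$-balanced partition has minimal weight among partitions with its odd sequence. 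I would first try to get this from \cite{turek2025mullineuxmapdbalancedpartitions}, where Proposition~1.6 is phrased as a genuine characterization of $2$-balanced partitions.

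\textbf{Step 3: zeros and regularity.} The second sentence of Proposition~\ref{prop:turkdbalancedmaximaloddsequence} says $\lambda_\theta$ is $e$-regular iff $\theta$ contains a zero. Restricting the bijection accordingly identifies the new sequences with a zero with the $2$-balanced $e$-regular partitions in the block.

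The main obstacle is Step 2, the surjectivity. Everything else is direct substitution into the earlier counts.
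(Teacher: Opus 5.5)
Your proposal is correct and is the paper's argument: the paper's proof combines the $d=2$ enumerations of Section~\ref{se:RoCK} (Corollary~\ref{co:all d-balanced} for $M_e^\gamma(w)$, Theorem~\ref{thm:maincountRouquier} for $N_e^\gamma(w)$) with Proposition~\ref{prop:turkdbalancedmaximaloddsequence}, via $\theta\mapsto$ the unique maximal element of $\mathcal{E}_\theta(\gamma)$. Your worry in Step~2 is unnecessary, because the proposition as stated says a $2$-balanced $\lambda$ \emph{is} a maximal element of $\mathcal{E}_{O(\lambda)}(\gamma)$; this already gives $\lambda\in\mathcal{E}_{O(\lambda)}(\gamma)$, hence $w=w_{O(\lambda)}(\gamma)$, so surjectivity needs no further argument.
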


\begin{proof}
    This is a combination of Theorem~\ref{thm:maincountRouquier} and Proposition~\ref{prop:turkdbalancedmaximaloddsequence}.
\end{proof}

We now move on to prepare the battlefield for proving the other observed property in Example~\ref{ex:oddsequence-example-e5}.

\begin{lemma}\label{le:reappearance}
For every $e$-core $\gamma$ and integer $w\ge 0$,
\[
S_e^\gamma(w)\subseteq S_e^\gamma(w+2).
\]
\end{lemma}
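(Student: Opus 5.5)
Given $\lambda$ with $e$-core $\gamma$, $e$-weight $w$ and odd sequence $\theta$, I will construct a partition $\mu$ with the same $e$-core, $e$-weight $w+2$ and $O(\mu)=\theta$. The natural move is to add two rim $e$-hooks, i.e.\ to slide two beads down on the abacus, chosen so that the parities of the vacancy counts of all beads are unchanged. The parity of a bead is the parity of the number of gaps at lower-numbered positions. The cleanest candidate is the following. Take the canonical $e$-abacus of $\lambda$ and let $b$ be the largest bead, i.e.\ the bead at the greatest position. Slide it down twice, from position $b$ to $b+2e$; equivalently, set $\mu=(\lambda_1+2e,\lambda_2,\lambda_3,\dots)$. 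By Lemma~\ref{lem:hook} and Remark~\ref{re:rim hook removal}, $\mu$ has $e$-core $\gamma$ and $e$-weight $w+2$.

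Next I check the odd sequence. Every other bead lies below $b$ in position and has no new gaps before it, so its parity and runner are unchanged. The moved bead stays on the same runner, and its number of preceding gaps increases by exactly $2e-1$: the positions $b,b+1,\dots,b+2e-1$ are now all gaps, since $b$ was the largest bead. But $2e-1$ is odd, so this flips the parity of $b$, which is wrong. In row terms, $\lambda_1+2e\equiv\lambda_1\pmod 2$ holds, so parity is actually preserved and I should recount: the gaps strictly before $b+2e$ that are at or after $b$ are $b$ (now vacated) through $b+2e-1$, numbering $2e$. So the count increases by $2e$, which is even, and the residue $(\lambda_1+2e)-1\equiv\lambda_1-1\pmod e$ is also unchanged. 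Hence $O(\mu)=O(\lambda)$, and in fact this needs no oddness of $e$.

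The one edge case is $\lambda=\varnothing$ with empty core. There $\mu=(2e)$ is fine, since the first row has length $2e$ (even), so it contributes nothing, matching $\theta=0$. More generally, "first row" makes sense for any $\lambda$ once zero parts are allowed. The only step needing care is the row-based verification above: the first row stays the longest, its parity is preserved because $2e$ is even, and its end residue is unchanged because $2e\equiv0\pmod e$. No other row is affected, so I expect no real obstacle.
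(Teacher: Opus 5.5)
Your proposal is correct and is essentially the paper's proof. Both lengthen the first row by $2e$, i.e.\ add two rim $e$-hooks, which preserves the $e$-core, raises the $e$-weight by two, and leaves unchanged both the parity of the first row and its end residue $\lambda_1-1 \pmod e$. In a final write-up, drop the initial miscount of $2e-1$ and state directly that the moved bead gains exactly $2e$ preceding gaps (positions $b,\dots,b+2e-1$), as your recount correctly shows.
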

\begin{proof} Given any partition $\lambda$, simply increasing its first part by $2e$ preserves the odd sequence as the parity and the $e$-residue of the right-most node of the first row are preserved. As this corresponds to adding two rim $e$-hooks, the $e$-core is preserved and the $e$-weight increases by two.
\end{proof}

So if an odd sequence appears in a block of $e$-weight $w$, it also appears in the blocks with the same $e$-core and $e$-weight $w+2, w+4, \dots$. This is not the case if we replace the even integers with odd ones. 
 
\begin{lemma}\label{le:parity}
For $e$-core $\gamma$ and integers $w, w'\ge 0$, if $\theta \in S_e^\gamma(w)\cap S_e^\gamma(w')$, then $w\equiv w' \pmod 2$.
\end{lemma}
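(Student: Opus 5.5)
The plan is to find a parity invariant that the odd sequence $\theta$ and the $e$-core $\gamma$ together determine, and which pins down $w \bmod 2$. The natural candidate is the parity of $|\lambda|$. Write $n_i(\lambda)$ for the entries of $O(\lambda)$. The number of odd parts of $\lambda$ is $\sum_i n_i$, which $\theta$ determines. Also $|\lambda| \equiv \#\{\text{odd parts of }\lambda\} \pmod 2$, since even parts contribute nothing mod $2$. Hence $|\lambda| \equiv \sum_i n_i \pmod 2$ is determined by $\theta$.

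On the other hand, if $\lambda$ has $e$-core $\gamma$ and $e$-weight $w$, then $|\lambda| = |\gamma| + ew$. Since $e$ is odd (as assumed throughout this section), this gives
\[
|\lambda| \equiv |\gamma| + w \pmod 2.
\]

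Now take $\lambda$ with core $\gamma$ and weight $w$, and $\mu$ with core $\gamma$ and weight $w'$, both with odd sequence $\theta$. Then
\[
|\gamma| + w \equiv \sum_i n_i \equiv |\gamma| + w' \pmod 2,
\]
so $w \equiv w' \pmod 2$.

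There is no real obstacle here. The only point needing care is that the argument uses $e$ odd, which is the standing assumption of the section, and this is consistent with the preceding remark that adding a rim $e$-hook changes the parity of $\sum_i n_i$ when $e$ is odd. If one wanted an abacus-flavoured proof instead, the same conclusion follows from Remark~\ref{re:rim hook removal}: each removal of a rim $e$-hook flips the parity of the size. The size-parity argument above avoids any case analysis, so that is the one I would write.
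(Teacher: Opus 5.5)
Your proposal is correct and is essentially the paper's own proof. Both arguments note that $\sum_i n_i$ counts the odd parts, so $|\lambda| \equiv \sum_i n_i \pmod 2$, and compare this with $|\lambda| = |\gamma| + ew \equiv |\gamma| + w \pmod 2$, using that $e$ is odd.
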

\begin{proof}
Suppose $\lambda \vdash n$ has $e$-core $\gamma$ and odd sequence $\theta=(n_0,n_1,\ldots, n_{e-1})$. Then $\sum_{i=0}^{e-1}n_i$ is the number of odd parts in $\lambda$, which is congruent to $n$ modulo $2$. But $n=|\gamma|+ew$ and $e$ is odd so $ew \equiv w \pmod 2$. Thus
$$\sum_{i=0}^{e-1}n_i \equiv n =|\gamma|+ew \equiv |\gamma|+w \pmod 2.$$
The same is true if $w$ is replaced by $w'$; therefore, $w\equiv w' \pmod 2$.
\end{proof}

We are now ready to give the formulas for $T_e^\gamma(w)$ and $Z_e^\gamma(w)$. In the statements, we interpret $T_e^\gamma(w)$ and $Z_e^\gamma(w)$ as zero if $w<0$.

\begin{corollary}\label{cor:zero-recursion}
For every $e$-core $\gamma$ and integer $w\ge 0$, we have
\[
T_e^\gamma(w) = T_e^\gamma(w-2)+M_e^\gamma(w)
\]
and
\[
Z_e^\gamma(w)=Z_e^\gamma(w-2)+N_e^\gamma(w).
\]
\end{corollary}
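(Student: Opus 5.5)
We will prove that $S_e^\gamma(w)$ is the disjoint union of $S_e^\gamma(w-2)$ and the set of new odd sequences at $e$-weight $w$. Taking cardinalities then gives the first identity. Intersecting both sides with the set of sequences containing a zero gives the second.

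\textbf{Step 1: $S_e^\gamma(w-2)\subseteq S_e^\gamma(w)$.} This is Lemma~\ref{le:reappearance} applied with $w-2$ in place of $w$. When $w<2$ the set $S_e^\gamma(w-2)$ is empty, which matches the convention that $T_e^\gamma$ and $Z_e^\gamma$ vanish at negative arguments.

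\textbf{Step 2: every $\theta\in S_e^\gamma(w)$ is either new at weight $w$ or lies in $S_e^\gamma(w-2)$, but not both.} Let $\theta\in S_e^\gamma(w)$ and suppose $\theta$ is not new. Then $\theta\in S_e^\gamma(w')$ for some $w'<w$, and in particular $w_\theta(\gamma)<w$. By Lemma~\ref{le:parity}, $w'\equiv w \pmod 2$, so $w'\le w-2$. Applying Lemma~\ref{le:reappearance} repeatedly to $w',w'+2,\dots$ gives $\theta\in S_e^\gamma(w-2)$. Conversely, if $\theta\in S_e^\gamma(w-2)$ then $\theta$ already occurs at a weight smaller than $w$, so it is not new at $w$. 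Hence the union is disjoint.

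\textbf{Step 3: count.} By Definition~\ref{def:newzerosequences}, the number of new sequences at weight $w$ is $M_e^\gamma(w)$, and the number of those containing a zero is $N_e^\gamma(w)$. Whether a sequence contains a zero is a property of the sequence alone, so the decomposition in Steps 1 and 2 restricts to sequences containing a zero. Counting each side of the decomposition, once for all sequences and once for those with a zero, yields both recursions.

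There is no real obstacle here. The only point needing care is the parity step, which uses that $e$ is odd (this is the standing assumption of this section) so that Lemma~\ref{le:parity} applies.
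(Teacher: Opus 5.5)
Your proposal is correct and follows essentially the same route as the paper: Lemma~\ref{le:reappearance} gives $S_e^\gamma(w-2)\subseteq S_e^\gamma(w)$, and Lemma~\ref{le:parity} together with iterated reappearance shows that every non-new sequence in $S_e^\gamma(w)$ already lies in $S_e^\gamma(w-2)$. Your explicit check of disjointness, and your restriction of the decomposition to zero-containing sequences, spell out what the paper leaves implicit.
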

 
\begin{proof}
By Lemma~\ref{le:reappearance}, every odd sequence $\theta\in S_e^\gamma(w-2)$ reappears in $S_e^\gamma(w)$. Conversely, if an odd sequence $\theta\in S_e^\gamma(w)$ occurs at a smaller $e$-weight $w'<w$, then, by Lemma~\ref{le:parity}, $w'=w-2k$ for some positive integer $k$. By applying Lemma~\ref{le:reappearance} $(k-1)$-times, we have that $\theta\in S_e^\gamma(w-2)$. Therefore, $S_e^\gamma(w-2)$ consists of the new odd sequences and odd sequences in $S_e^\gamma(w)$. Both formulae now follow.
\end{proof}

The recursive relations give rise to the generating functions for all four counting functions.

\begin{theorem}\label{th:generating functions}
Let $\gamma$ be an $e$-core partition. Then
\begin{align*}
    \sum_{w\ge 0} N_e^\gamma(w)x^w &= \frac{1}{(1-x)^{(e-1)/2}};\\
    \sum_{w\ge 0} M_e^\gamma(w)x^w &= \frac{1}{(1-x)^{(e+1)/2}};\\
    \sum_{w\ge 0} Z_e^\gamma(w)x^w &= \frac{1}{(1+x)(1-x)^{(e+1)/2}};\\
    \sum_{w\ge 0} T_e^\gamma(w)x^w &= \frac{1}{(1+x)(1-x)^{(e+3)/2}}.
\end{align*}
In particular, all four counting functions depend only on $e$ and $w$ and not on $\gamma$. 
\end{theorem}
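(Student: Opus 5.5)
Corollary~\ref{co:newzero-count} gives the generating functions for $N$ and $M$, and Corollary~\ref{cor:zero-recursion} turns these into the generating functions for $Z$ and $T$.

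For $N$, the standard expansion $\frac{1}{(1-x)^m}=\sum_{w\ge0}\binom{w+m-1}{w}x^w$ with $m=\frac{e-1}{2}$ matches $N_e^\gamma(w)=\binom{w+\frac{e-3}{2}}{w}$ term by term. For $M$, the same expansion with $m=\frac{e+1}{2}$ matches $M_e^\gamma(w)=\binom{w+\frac{e-1}{2}}{w}$. Since $e$ is odd, both exponents are positive integers: the smallest case $e=3$ gives $m=1$ for $N$. So no degenerate cases arise.

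For $Z$, multiply the recursion $Z_e^\gamma(w)=Z_e^\gamma(w-2)+N_e^\gamma(w)$ by $x^w$ and sum over all $w\ge 0$. By the convention that $Z_e^\gamma(w)=0$ for $w<0$, this yields $(1-x^2)\sum_w Z_e^\gamma(w)x^w=\sum_w N_e^\gamma(w)x^w$. Dividing by $1-x^2=(1-x)(1+x)$ gives
\[
\sum_{w\ge0}Z_e^\gamma(w)x^w=\frac{1}{(1+x)(1-x)^{(e+1)/2}}.
\]
The identical computation with the recursion for $T$ and the generating function for $M$ gives
\[
\sum_{w\ge0}T_e^\gamma(w)x^w=\frac{1}{(1+x)(1-x)^{(e+3)/2}}.
\]
The base cases $w=0,1$ cause no trouble, because the recursion of Corollary~\ref{cor:zero-recursion} is stated for every $w\ge0$ under the zero convention.

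The final sentence of the statement follows because each right-hand side involves only $e$. The real work was already done in Corollaries~\ref{co:newzero-count} and~\ref{cor:zero-recursion}; what remains is routine formal power series manipulation.
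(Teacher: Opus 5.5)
Your proposal is correct and takes the same route as the paper's proof. You read off the $N$ and $M$ series from the binomial formulas of Corollary~\ref{co:newzero-count} via $\sum_{w\ge0}\binom{w+m-1}{w}x^w=(1-x)^{-m}$, then divide by $1-x^2$ using the recursions of Corollary~\ref{cor:zero-recursion} to obtain the $Z$ and $T$ series.
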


\begin{proof}
    By Corollary~\ref{co:newzero-count}, we have
    \[
\sum_{w\ge 0} N_e^\gamma(w)x^w
=
\sum_{w\ge 0} \binom{w+\frac{e-3}{2}}{w}x^w
=
\frac{1}{(1-x)^{(e-1)/2}}
\]
and
\[
\sum_{w\ge 0} M_e^\gamma(w)x^w
=
\sum_{w\ge 0} \binom{w+\frac{e-1}{2}}{w}x^w
=
\frac{1}{(1-x)^{(e+1)/2}}.
\]
Corollary~\ref{cor:zero-recursion} shows that
\[
\sum_{w\ge 0} Z_e^\gamma(w)x^w
=
\frac{1}{1-x^2}\sum_{w\ge 0} N_e^\gamma(w)x^w
\]
and
\[
\sum_{w\ge 0} T_e^\gamma(w)x^w
=
\frac{1}{1-x^2}\sum_{w\ge 0} M_e^\gamma(w)x^w.
\]
We immediately obtain the last two generating functions from the first two.
\end{proof}

There are two interesting relations one can read from the generating functions of $Z_e^{\gamma}(w)$ and $T_e^\gamma(w)$. While there is a combinatorial explanation for the first one, the other one seems more mysterious.

\begin{corollary}\label{cor:shift-decomposition}
For every $e$-core $\gamma$ and integer $w\ge 0$, we have
\[
T_e^\gamma(w)=T_e^\gamma(w-1)+Z_e^\gamma(w).
\]
\end{corollary}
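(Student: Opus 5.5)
The identity will come straight out of the generating functions in Theorem~\ref{th:generating functions}. Write $F_T(x)=\sum_{w\ge0}T_e^\gamma(w)x^w$ and $F_Z(x)=\sum_{w\ge0}Z_e^\gamma(w)x^w$. The claimed identity, for all $w\ge 0$ and with the convention $T_e^\gamma(-1)=0$, is equivalent to the power series identity
\[
(1-x)F_T(x)=F_Z(x).
\]
By Theorem~\ref{th:generating functions},
\[
(1-x)\cdot\frac{1}{(1+x)(1-x)^{(e+3)/2}}=\frac{1}{(1+x)(1-x)^{(e+1)/2}},
\]
which is exactly $F_Z(x)$. Comparing coefficients of $x^w$ gives $T_e^\gamma(w)-T_e^\gamma(w-1)=Z_e^\gamma(w)$, which is the statement.

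There is no real obstacle here; the only care needed is the $w=0$ case. There the identity reads $T_e^\gamma(0)=Z_e^\gamma(0)$, and both sides equal $1$: the block consists only of $\gamma$, whose odd sequence contains a zero since $\gamma$ is $e$-regular. This is consistent with Proposition~\ref{prop:turkdbalancedmaximaloddsequence}.

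One could also look for a bijective explanation, as the remark before the corollary suggests. The natural candidate is to show directly that the odd sequences in $S_e^\gamma(w)$ with no zero entry are exactly those in $S_e^\gamma(w-1)$ with every entry increased by one. A plausible mechanism is adding a column of height $e$ (or a suitable rim $e$-hook) that toggles one odd bead on each runner. Establishing this would require control beyond the new sequences noted in Example~\ref{ex:oddsequence-example-e5}, so I would not pursue it and would rely on the generating function argument above.
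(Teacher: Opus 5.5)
Your argument is correct and is essentially the paper's proof: the paper also reads off $\sum_{w\ge0}T_e^\gamma(w)x^w=\frac{1}{1-x}\sum_{w\ge0}Z_e^\gamma(w)x^w$ from Theorem~\ref{th:generating functions} and compares coefficients. Your separate check of $w=0$ is unnecessary, since the constant terms already agree; its justification actually rests on Proposition~\ref{prop:turkdbalancedmaximaloddsequence} applied to the $2$-balanced $e$-regular core $\gamma$, not on $e$-regularity alone.
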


\begin{proof}
    This directly follows from Theorem~\ref{th:generating functions} after noting that
    \[
    \sum_{w\ge 0} T_e^\gamma(w)x^w = \frac{1}{1-x}\sum_{w\ge 0} Z_e^\gamma(w)x^w.
    \]
\end{proof}

Combinatorially, the equality in Corollary~\ref{cor:shift-decomposition} says that the odd sequences $S_e^\gamma(w)$ consisting of positive entries are in bijection with $S_e^\gamma(w-1)$ (which we interpret as the empty set if $w=0$). The bijection is given by the following map.

\begin{definition}
\label{definition:shiftoperationonoddsequence}
We define a shift map $\tau$ on odd sequences by 
$$
\tau((n_0,n_1,\dots,n_{e-1}))=(n_0+1,n_1+1,\dots,n_{e-1}+1).
$$
\end{definition}

Now recall the map $\psi$ from Section~\ref{se:RoCK} which appends to a partition $e$ parts equal to $1$. For each $i=0,1,\dots, e-1$, this function adds one odd part such that its right-most node has $e$-residue $i$. That is, $\tau(O(\lambda)) = O(\psi(\lambda))$. Since $\psi$ preserves $e$-cores and increases $e$-weight by one, the restriction
\[
\tau:S_e^\gamma(w-1) \rightarrow S_e^\gamma(w)
\]
is a well-defined injection. We can say a bit more.

\begin{proposition}\label{pr:shift}
Suppose that every entry of $\theta = (n_0, n_1,\dots,n_{e-1}) \in S_e^\gamma(w)$ is positive. Then
\[
(n_0-1, n_1 -1,\dots, n_{e-1}-1) \in S_e^\gamma(w-1).
\]
Thus the image of the shift map $\tau:S_e^\gamma(w-1) \rightarrow S_e^\gamma(w)$ is the subset of odd sequences in $S_e^\gamma(w)$ with all entries positive. 
\end{proposition}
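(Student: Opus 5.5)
The plan is to pass to the dominance-maximal element of $\mathcal{E}_\theta(\gamma)$. Proposition~\ref{prop:turkdbalancedmaximaloddsequence} says this element is $2$-balanced. Lemma~\ref{le:add a column} then lets us strip off $e$ parts equal to $1$, which lowers every entry of the odd sequence by one. Finally Lemmas~\ref{le:parity} and~\ref{le:reappearance} transport the result from the minimal weight up to weight $w-1$.

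Concretely, let $\theta=(n_0,\dots,n_{e-1})\in S_e^\gamma(w)$ with all $n_i>0$, and put $w_0=w_\theta(\gamma)$.

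\begin{enumerate}
\item Since $\theta\in S_e^\gamma(w)$, we have $w_0\le w$. Lemma~\ref{le:parity} applied to $\theta\in S_e^\gamma(w)\cap S_e^\gamma(w_0)$ gives $w\equiv w_0 \pmod 2$.

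\item Choose $\lambda$ maximal in the dominance order on $\mathcal{E}_\theta(\gamma)$. By Proposition~\ref{prop:turkdbalancedmaximaloddsequence} (this needs $e$ odd, which is the standing assumption of this section), $\lambda$ is $2$-balanced. Since $\theta$ contains no zero, the same proposition shows that $\lambda$ is $e$-singular.

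\item $\lambda$ is a $2$-balanced $e$-singular partition with $e$-core $\gamma$ and $e$-weight $w_0$. This weight must be at least $1$, since an $e$-core has no $e$ parts equal to $1$ (that would give a rim $e$-hook). Lemma~\ref{le:add a column} with $d=2$ then produces a partition $\mu$ with $e$-core $\gamma$ and $e$-weight $w_0-1$ such that $\lambda=\psi(\mu)$.

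\item By the observation $\tau(O(\mu))=O(\psi(\mu))$, we get $O(\mu)=(n_0-1,\dots,n_{e-1}-1)$. Hence this sequence lies in $S_e^\gamma(w_0-1)$.

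\item From step 1, $w-1\ge w_0-1$ and the difference $(w-1)-(w_0-1)=w-w_0$ is even. Applying Lemma~\ref{le:reappearance} $(w-w_0)/2$ times gives $(n_0-1,\dots,n_{e-1}-1)\in S_e^\gamma(w-1)$. This proves the first claim.
\end{enumerate}

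For the second claim, recall that $\tau:S_e^\gamma(w-1)\to S_e^\gamma(w)$ is already a well-defined injection, and every sequence in its image clearly has all entries positive. Conversely, the first claim shows that every $\theta\in S_e^\gamma(w)$ with all entries positive equals $\tau$ of an element of $S_e^\gamma(w-1)$. So the image is exactly the set of odd sequences in $S_e^\gamma(w)$ with all entries positive.

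The main point needing care is step 3, namely applying Lemma~\ref{le:add a column} at the minimal weight $w_0$ rather than at $w$. An arbitrary partition realizing $\theta$ at weight $w$ need not be $2$-balanced and so need not end in $e$ ones. Only the dominance-maximal element at weight $w_0$ is guaranteed to be $2$-balanced, so the parity bookkeeping in steps 1 and 5 is what lets us descend there and climb back up to $w-1$.
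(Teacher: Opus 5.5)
Your proof is correct and follows the paper's argument essentially step for step. You pass to the dominance-maximal element of $\mathcal{E}_\theta(\gamma)$ at weight $w_\theta(\gamma)$, which is $2$-balanced and, since $\theta$ has no zero, $e$-singular; you strip $e$ ones via Lemma~\ref{le:add a column}, then climb back to weight $w-1$ using Lemmas~\ref{le:parity} and~\ref{le:reappearance}. The only cosmetic difference is that the paper gets $w_\theta(\gamma)>0$ directly from $e$-singularity (an $e$-core is $e$-regular), which is slightly cleaner than your route through the $e$ trailing parts equal to $1$.
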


\begin{proof}
    First let $w'=w_\theta(\gamma)$, i.e. $w'$  is the smallest $e$-weight where $\theta$ appears. Let $\mu$ be the maximal element in $\mathcal{E}_\theta(\gamma)$, so by Proposition \ref{prop:turkdbalancedmaximaloddsequence}, $\mu$ is $2$-balanced and $e$-singular. The latter shows that $w'>0$ and, by Lemma~\ref{le:add a column}, $\mu = \psi(\lambda)$ for some partition $\lambda$ (that is, $\mu$ has $e$ parts equal to one). As $\tau(O(\lambda)) = O(\mu)=\theta$, we see that $(n_0-1, n_1 -1,\dots, n_{e-1}-1) \in S_e^\gamma(w'-1)$.

    From Lemma~\ref{le:parity} we know $w \equiv w' \pmod 2$, so $w'=w-2k$ for some non-negative integer $k$. Applying Lemma~\ref{le:reappearance} $k$-times, we therefore have $(n_0-1, n_1 -1,\dots, n_{e-1}-1) \in S_e^\gamma(w'-1)\subseteq S_e^\gamma(w-1)$, as required.
\end{proof}

We conclude that the shift map indeed witnesses the identity $T_e^\gamma(w)=T_e^\gamma(w-1)+Z_e^\gamma(w)$. We can also use the shift map to informally describe the generating function of $T_e^\gamma(w)$; it admits a natural factorization reflecting the three propagation phenomena for odd sequences:
\[
\sum_{w\ge 0} T_e^\gamma(w)x^w
=
\underbrace{\frac{1}{1-x}}_{\substack{\text{shift factor:}\\\text{sequences without zeros}\\\text{are shifts of ones with zeros}}}
\cdot
\underbrace{\frac{1}{1-x^2}}_{\substack{\text{reappearance factor:}\\S_e^\gamma(w)\subseteq S_e^\gamma(w+2)}}
\cdot
\underbrace{\frac{1}{(1-x)^{(e-1)/2}}}_{\substack{\text{new-zero factor:}\\\sum_{w\ge0}N_e^\gamma(w)x^w}}.
\]
Each factor has a direct combinatorial meaning: the shift factor $\frac{1}{1-x}$ corresponds to sequences without zeros being precisely the shifts of sequences with zeros (Proposition~\ref{pr:shift}); the reappearance factor $\frac{1}{1-x^2}$ corresponds to the inclusion $S_e^\gamma(w)\subseteq S_e^\gamma(w+2)$ (Lemma~\ref{le:reappearance}); and the new-zero factor $\frac{1}{(1-x)^{(e-1)/2}}$ is the generating function for the genuinely new zero-containing sequences, whose count $N_e^\gamma(w)=\binom{w+\frac{e-3}{2}}{w}$ is given by Corollary~\ref{co:newzero-count}.

Finally, as promised, we provide one more formula for $T_e^\gamma(w)$, a Pascal triangle-like recurrence; see Table~\ref{tab:Te-table} for an example. There and in the statement $T_e(w)$ and $Z_e(w)$ denote the common value of $T_e^\gamma(w)$ and $Z_e^\gamma(w)$, respectively, now known to be independent of $\gamma$.

\begin{corollary}\label{cor:recurrence}
For every odd integer $e>3$ and integer $w\geq 0$,
\[
T_e(w)=T_e(w-1)+T_{e-2}(w).
\]
\end{corollary}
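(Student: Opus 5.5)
The plan is to read the recurrence directly off the generating functions in Theorem~\ref{th:generating functions}. Let
\[
F_e(x)=\sum_{w\ge 0}T_e(w)x^w=\frac{1}{(1+x)(1-x)^{(e+3)/2}}.
\]
For odd $e>3$, the integer $e-2$ is odd and greater than $1$, so Theorem~\ref{th:generating functions} applies to it as well. It gives
\[
F_{e-2}(x)=\frac{1}{(1+x)(1-x)^{(e+1)/2}}.
\]

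With the convention $T_e(w)=0$ for $w<0$, the identity $T_e(w)=T_e(w-1)+T_{e-2}(w)$ for all $w\ge 0$ is equivalent to the power series identity
\[
F_e(x)=xF_e(x)+F_{e-2}(x),\qquad\text{that is,}\qquad (1-x)F_e(x)=F_{e-2}(x).
\]
This holds because multiplying $F_e$ by $1-x$ lowers the exponent $(e+3)/2$ of $1-x$ by one, to $(e+1)/2$. Comparing coefficients of $x^w$ then gives the recurrence.

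The only points that need care are these. First, the hypothesis $e>3$ is exactly what makes $e-2>1$, so that $T_{e-2}$ is defined and the theorem applies to it. Second, we need the common value $T_e(w)$ to be independent of $\gamma$, which is the last assertion of Theorem~\ref{th:generating functions}. I do not expect a real obstacle; the main thing to watch is the boundary case $w=0$, which the convention $T_e(-1)=0$ handles.

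If a more conceptual explanation were wanted, one could instead relate $T_{e-2}(w)$ to $Z_e(w)$ via Corollary~\ref{cor:shift-decomposition}. Indeed, $Z_e(w)=T_{e-2}(w)$ follows from the same generating-function comparison, since
\[
\sum_{w\ge 0}Z_e(w)x^w=\frac{1}{(1+x)(1-x)^{(e+1)/2}}=F_{e-2}(x).
\]
I would mention this as a remark but keep the generating-function proof.
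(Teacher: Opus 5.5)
Your proof is correct and is essentially the paper's argument. The paper first writes $T_e(w)-T_e(w-1)=Z_e(w)$ via Corollary~\ref{cor:shift-decomposition}, then identifies the generating function of $Z_e$ with that of $T_{e-2}$; this is your identity $(1-x)F_e=F_{e-2}$ split into the two steps you mention in your closing remark.
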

 
\begin{proof}
By Corollary~\ref{cor:shift-decomposition}
\[
T_e^\gamma(w)-T_e^\gamma(w-1)=Z_e(w).
\]
By Theorem~\ref{th:generating functions}, we have
\[
\sum_{w\ge 0} Z_e(w)x^w
=
\frac{1}{(1+x)(1-x)^{(e+1)/2}}
=
\frac{1}{(1+x)(1-x)^{((e-2)+3)/2}}
=
\sum_{w\ge 0} T_{e-2}(w)x^w.
\]
Hence
\[
T_e^\gamma(w)=T_e^\gamma(w-1)+T_{e-2}(w).
\]
\end{proof}
 
\begin{table}[ht]
\centering
\[
\begin{array}{c|cccccc}
 & w=0 & w=1 & w=2 & w=3 & w=4 & w=5 \\
\hline
e=3  & 1 & 2 & 4 & 6 & 9 & 12 \\
e=5  & 1 & 3 & 7 & 13 & 22 & 34 \\
e=7  & 1 & 4 & 11 & 24 & 46 & 80 \\
e=9  & 1 & 5 & 16 & 40 & 86 & 166 \\
e=11 & 1 & 6 & 22 & 62 & 148 & 314
\end{array}
\]
\caption{Values of $T_e(w)$ for odd $e$ and small $e$-weights $w$ illustrating the recurrence $T_e(w)=T_e(w-1)+T_{e-2}(w)$ from Corollary~\ref{cor:recurrence}.}
\label{tab:Te-table}
\end{table}

\section*{Acknowledgment}
The authors would like to thank Gunter Malle and Liron Speyer, whose questions were the catalysts for the main enumeration problem answered in this manuscript, Stacey Law for comments on an earlier version of the manuscript and Kai Meng Tan for pointing out that the strategy of counting certain partitions using crystal reflections is also used in \cite{ChuangMiyachiTanTilings17}. The second author is grateful to Bim Gustavsson and Stacey Law for many in-depth conversations while working on a connected project, and in particular, for their ideas that directly led to the proof of Lemma~\ref{le:add a column}.

The second author was supported by the LMS Early Career Fellowship ECF-2025-26 at the University of Birmingham and is currently funded by the FY2025 JSPS Postdoctoral Fellowship for Research in Japan (Short-term(PE)) PE25723 at the Okinawa Institute of Science and Technology.

\bibliographystyle{plain}
\bibliography{references}

@article {FayersPartitionModelsCrystal,
    AUTHOR = {Fayers, Matthew},
     TITLE = {Partition models for the crystal of the basic
              {$U_q(\widehat{\mathfrak{sl}}_n)$}-module},
   JOURNAL = {J. Algebraic Combin.},
  FJOURNAL = {Journal of Algebraic Combinatorics. An International Journal},
    VOLUME = {32},
      YEAR = {2010},
    NUMBER = {3},
     PAGES = {339--370},
      ISSN = {0925-9899,1572-9192},
   MRCLASS = {17B37 (05E10 17B10 17B67)},
  MRNUMBER = {2721055},
MRREVIEWER = {Darren\ Funk-Neubauer},
       DOI = {10.1007/s10801-010-0217-9},
       URL = {https://doi.org/10.1007/s10801-010-0217-9},
}

@article {FRT,
    AUTHOR = {Frame, J. S. and Robinson, G. de B. and Thrall, R. M.},
     TITLE = {The hook graphs of the symmetric groups},
   JOURNAL = {Canad. J. Math.},
  FJOURNAL = {Canadian Journal of Mathematics. Journal Canadien de
              Math\'ematiques},
    VOLUME = {6},
      YEAR = {1954},
     PAGES = {316--324},
      ISSN = {0008-414X,1496-4279},
   MRCLASS = {20.0X},
  MRNUMBER = {62127},
MRREVIEWER = {D.\ E.\ Littlewood},
       DOI = {10.4153/cjm-1954-030-1},
       URL = {https://doi.org/10.4153/cjm-1954-030-1},
}

@manual{GAP4,
    organization = "The GAP~Group",
    title        = "{GAP -- Groups, Algorithms, and Programming,
                    Version 4.15.1}",
    year         = 2025,
    url          = "\url{https://www.gap-system.org}",
}

@article {GKS,
    AUTHOR = {Garvan, Frank and Kim, Dongsu and Stanton, Dennis},
     TITLE = {Cranks and {$t$}-cores},
   JOURNAL = {Invent. Math.},
  FJOURNAL = {Inventiones Mathematicae},
    VOLUME = {101},
      YEAR = {1990},
    NUMBER = {1},
     PAGES = {1--17},
      ISSN = {0020-9910,1432-1297},
   MRCLASS = {11P83 (05A17 20C30 33D80)},
  MRNUMBER = {1055707},
MRREVIEWER = {George\ E.\ Andrews},
       DOI = {10.1007/BF01231493},
       URL = {https://doi.org/10.1007/BF01231493},
}

@misc{hemmerturek2026,
  author = {David J. Hemmer and Pavel Turek},
  title = {New columns in decomposition matrices of symmetric groups for every block},
  note = {Preprint},
  year = {2026}
}

@book {JamesKerber,
    AUTHOR = {James, Gordon and Kerber, Adalbert},
     TITLE = {The representation theory of the symmetric group},
    SERIES = {Encyclopedia of Mathematics and its Applications},
    VOLUME = {16},
      NOTE = {With a foreword by P. M. Cohn,
              With an introduction by Gilbert de B. Robinson},
 PUBLISHER = {Addison-Wesley Publishing Co., Reading, MA},
      YEAR = {1981},
     PAGES = {xxviii+510},
      ISBN = {0-201-13515-9},
   MRCLASS = {20-02 (20C30)},
  MRNUMBER = {644144},
MRREVIEWER = {A.\ O.\ Morris},
}

@book{kleshchev,
  author    = {A. Kleshchev},
  title     = {Linear and Projective Representations of Symmetric Groups},
  series    = {Cambridge Tracts in Mathematics},
  volume    = {163},
  publisher = {Cambridge University Press},
  year      = {2005}
}

@article{misramiwa,
  author    = {Kailash Misra and Tetsuji Miwa},
  title     = {Crystal base for the basic representation of {$U_q(\mathfrak{sl}(n))$}},
  journal   = {Comm. Math. Phys.},
  fjournal  = {Communications in Mathematical Physics},
  volume    = {134},
  year      = {1990},
  number    = {1},
  pages     = {79--88},
  issn      = {0010-3616, 1432-0916},
  mrclass   = {17B37},
  mrnumber  = {1079801},
  mrreviewer= {Andrew Pressley},
  url       = {http://projecteuclid.org/euclid.cmp/1104201614}
}

@article{turek2025mullineuxmapdbalancedpartitions,
      title={Mullineux map: $d$-balanced partitions and $d$-runner matrices}, 
      author={Pavel Turek},
      year={2025},
      journal={arXiv:2504.03864v2},
      primaryClass={math.CO},
      url={https://arxiv.org/abs/2504.03864}, 
}

@article {KleshchevBranchingIII96,
    AUTHOR = {Kleshchev, A. S.},
     TITLE = {Branching rules for modular representations of symmetric
              groups. {III}. {S}ome corollaries and a problem of
              {M}ullineux},
   JOURNAL = {J. London Math. Soc. (2)},
  FJOURNAL = {Journal of the London Mathematical Society. Second Series},
    VOLUME = {54},
      YEAR = {1996},
    NUMBER = {1},
     PAGES = {25--38},
      ISSN = {0024-6107,1469-7750},
   MRCLASS = {20C30 (20C20 20G05)},
  MRNUMBER = {1395065},
MRREVIEWER = {Jens\ C.\ Jantzen},
       DOI = {10.1112/jlms/54.1.25},
       URL = {https://doi.org/10.1112/jlms/54.1.25},
}

@article{ChuangMiyachiTanTilings17,
title = {Parallelotope tilings and {$q$}-decomposition numbers},
journal = {Advances in Mathematics},
volume = {321},
pages = {80--159},
year = {2017},
issn = {0001-8708},
doi = {https://doi.org/10.1016/j.aim.2017.09.024},
url = {https://www.sciencedirect.com/science/article/pii/S0001870816315262},
author = {Joseph Chuang and Hyohe Miyachi and Kai Meng Tan},
}

\end{document}